\documentclass[review,3p,times,sort&compress]{elsarticle}

\usepackage{amsmath,amssymb,amsfonts,amsthm,bm}
\usepackage{graphicx}
\usepackage{subcaption}
\usepackage{multirow}
\usepackage{booktabs}
\usepackage{mathrsfs,float}
\usepackage{xcolor}
\usepackage[colorlinks,citecolor=blue,linkcolor=blue,urlcolor=blue]{hyperref}
\usepackage{algorithm}
\usepackage{algorithmic}
\journal{Applied Mathematics and Computation}

\newtheorem{theorem}{Theorem}[section]
\newtheorem{remark}[theorem]{Remark}

\newtheorem{proposition}[theorem]{Proposition}

\newtheorem{lemma}[theorem]{Lemma}

\begin{document}

\begin{frontmatter}

\title{A Localized Fourier Extension Method for Piecewise-Smooth Inverse Source Reconstruction}

\author[2]{Zhihong Dou}
\author[1]{Zhenyu Zhao}

\address[1]{School of Mathematics and Statistics, Shandong University of Technology, Zibo 255049, China}
\address[2]{Department of Basic Courses, Langfang Campus, Hebei University of Technology, Langfang 065099, China}

\begin{abstract}
We reconstruct a piecewise-smooth source in a Poisson equation on a
semi-infinite strip from noisy solution values measured along an interior
line. Applying the one-dimensional Dirichlet Laplacian to the observation
reduces the inverse problem to regularized second-order differentiation
followed by a boundedly invertible correction. The differentiated trace and
the source differ by an analytic smoothing term and therefore have the same
interior singular support and jump data.

This structure motivates a localized Fourier extension method that combines
two staggered detection partitions, GTSVD-regularized local derivative
coefficients, mollified conjugate Fourier sums, and structure-aligned
numerical differentiation. The source is then recovered by an exponentially
decaying spectral correction. For an exact partition, the reconstruction
inherits the piecewise differentiation rate
$\mathcal O(\delta^{(\bar s-2)/\bar s})$; local peak and partition
perturbation estimates describe the additional effect of breakpoint errors.
Comparisons with full-grid total-variation regularization and truncated
Fourier inversion show competitive high-noise performance and a pronounced
low-noise advantage of the localized method. Repeated Gaussian-noise tests
confirm robustness in the moderate- and low-noise regimes.
\end{abstract}

\begin{keyword}
inverse source problem;
piecewise-smooth reconstruction;
localized Fourier extension;
breakpoint detection;
regularized numerical differentiation
\end{keyword}

\end{frontmatter}

\section{Introduction}\label{sec:introduction}

Inverse source problems recover an inaccessible forcing term from indirect
measurements of a state governed by a partial differential equation. They
arise in heat conduction, pollutant detection, crack identification,
source localization, and thermo- and photo-acoustic imaging
\cite{CannonDuchateau1998,AlbaneseMonk2006,LiuUhlmann2015,Isakov2006}.
Because the forward map smooths the source, regularization is required
\cite{EnglHankeNeubauer1996}.

We consider the Poisson equation on a semi-infinite strip,
\begin{equation}\label{eq:forward_problem}
\left\{
\begin{aligned}
-u_{xx}(x,y)-u_{yy}(x,y)&=f(x),
&&0<x<\pi,\quad 0<y<\infty,\\
u(0,y)=u(\pi,y)&=0,
&&0\leq y<\infty,\\
u(x,0)&=0,
&&0\leq x\leq\pi,\\
u(x,y)&\ \text{is bounded as }y\to\infty .
\end{aligned}
\right.
\end{equation}
The source is reconstructed from the interior trace
\begin{equation}\label{eq:observation}
g(x)=u(x,y_0),\qquad 0<x<\pi,
\end{equation}
where $y_0>0$, from noisy data satisfying
\begin{equation}\label{eq:noise}
\|g^\delta-g\|_{L^2(0,\pi)}\leq\delta .
\end{equation}
For this model, spectral truncation, modified regularization,
quasi-reversibility, Hilbert-scale Tikhonov regularization, super-order
regularization, and Fourier extension have been studied
\cite{Yang2011,YangFu2012,ZhaoEtAl2014,LiEtAl2019,WenHuangLiu2021,Zhao2022}.
Most of these methods are designed for globally smooth sources.

Here $f$ is piecewise smooth and its breakpoints are unknown. Global spectral
representations then suffer from low global regularity and Gibbs oscillations,
whereas global smoothness penalties tend to blur interfaces. Total-variation
(TV) regularization is a standard structure-preserving alternative that does
not require explicit breakpoint detection
\cite{RudinOsherFatemi1992,AcarVogel1994,BenningBurger2018}; it is included
below as a full-grid benchmark. Discontinuous source identification has also
been considered in other settings
\cite{HettlichRundell2001,WangJiaCheng2002,WanWangYamamoto2006}, but the
unknown interfaces in \eqref{eq:forward_problem}--\eqref{eq:observation}
have not been incorporated into a localized reconstruction procedure.

The key structural observation follows from the Dirichlet operator
\[
L=-\frac{d^2}{dx^2},\qquad D(L)=H^2(0,\pi)\cap H_0^1(0,\pi).
\]
The observation operator satisfies
\[
g=L^{-1}\bigl(I-e^{-y_0\sqrt L}\bigr)f,
\]
and hence
\begin{equation}\label{eq:operator_decomposition}
q:=-g''=\bigl(I-e^{-y_0\sqrt L}\bigr)f.
\end{equation}
Thus the order-two instability is confined to numerical differentiation,
while the remaining operator is boundedly invertible. Moreover,
$q-f=-e^{-y_0\sqrt L}f$ is analytic in the interior, so $q$ and $f$ have the
same interior singular support and jump data. This reduction is independent
of the regularization used for the differentiation step.

To exploit it, we combine Fourier extension approximation
\cite{Huybrechs2010,AdcockEtAl2014,AdcockHuybrechs2019} with localized and
multi-interval constructions
\cite{ZhaoWang2026,ZhaoWangLiu2025}. Regularized derivative coefficients are
computed by GTSVD, and a mollified conjugate Fourier sum
\cite{GelbTadmor1999,GelbTadmor2000,CochranGelbWang2013} detects the
breakpoints. Two staggered fixed-length partitions prevent a breakpoint from
being hidden at every local endpoint. The measured trace is then
differentiated independently on the detected smooth components, and a smooth
spectral correction recovers the source. This extends the use of regularized
Fourier coefficients for edge detection in \cite{ZhaoYuJiaDou2025} from
numerical differentiation to a complete inverse-source reconstruction.

The main contributions are:
\begin{enumerate}
\item an operator reduction and structure-preservation theorem showing that
$-g''$ and $f$ have identical interior singular support and jumps;
\item a unified localized algorithm for automatic breakpoint detection,
structure-aligned differentiation, and bounded source correction, together
with local peak and interface-perturbation estimates;
\item numerical comparisons with truncated Fourier and full-grid TV methods,
including repeated Gaussian-noise tests that identify the accuracy and
stability regimes of the complete pipeline.
\end{enumerate}

Section~\ref{sec:direct} establishes the operator and structural results.
Section~\ref{sec:method} develops the reconstruction and its error estimates.
Section~\ref{sec:numerics} presents the comparisons and robustness tests, and
Section~\ref{sec:conclusion} concludes the paper.

\section{Operator formulation and structural properties}
\label{sec:direct}

Let
\[
\mathcal{H}=L^2(0,\pi)
\]
with the usual inner product $(\cdot,\cdot)$ and norm $\|\cdot\|$. We
introduce the Dirichlet operator
\begin{equation}\label{eq:def_L}
L=-\frac{d^2}{dx^2},
\qquad
D(L)=H^2(0,\pi)\cap H_0^1(0,\pi).
\end{equation}
Its normalized eigenfunctions and eigenvalues are
\[
\phi_n(x)=\sqrt{\frac{2}{\pi}}\sin(nx),
\qquad
L\phi_n=n^2\phi_n,
\qquad n=1,2,\ldots .
\]
For $f\in\mathcal{H}$, we write
\[
f=\sum_{n=1}^{\infty}f_n\phi_n,
\qquad
f_n=(f,\phi_n).
\]
By the standard separation-of-variables argument \cite{Yang2011}, the unique bounded
solution of the direct problem is given by
\begin{equation}\label{eq:direct_solution}
u(x,y)
=
\sum_{n=1}^{\infty}
\frac{1-e^{-ny}}{n^2}f_n\phi_n(x).
\end{equation}
Equivalently,
\begin{equation}\label{eq:forward_operator}
u(\cdot,y)=A_yf,
\qquad
A_y=L^{-1}\bigl(I-e^{-y\sqrt L}\bigr).
\end{equation}

For a fixed observation height $y_0>0$, the measured data
\[
g(x)=u(x,y_0)
\]
satisfy
\begin{equation}\label{eq:observation_operator}
g=A_{y_0}f.
\end{equation}
Since
\[
A_{y_0}\phi_n
=
\frac{1-e^{-ny_0}}{n^2}\phi_n,
\]
the operator $A_{y_0}$ is compact, self-adjoint, positive, and injective,
with singular values
\begin{equation}\label{eq:singular_values}
\mu_n(y_0)
=
\frac{1-e^{-ny_0}}{n^2},
\qquad n=1,2,\ldots .
\end{equation}
For every fixed $y_0>0$,
\[
\mu_n(y_0)\sim n^{-2}
\qquad\text{as }n\to\infty.
\]
Therefore, the recovery of $f$ from $g$ is a mildly ill-posed inverse
problem of order two.

\subsection{Decomposition of the inverse problem}
\label{subsec:decomposition}

The spectral representation \eqref{eq:singular_values} suggests that the
factor $n^2$ is responsible for the instability of the inverse problem.
To make this observation explicit, we apply $L$ to the exact observation
and define
\begin{equation}\label{eq:def_q}
q:=Lg=-g''.
\end{equation}
It follows from \eqref{eq:forward_operator} that
\begin{equation}\label{eq:By_equation}
q=B_{y_0}f,
\qquad
B_{y_0}:=I-e^{-y_0\sqrt L}.
\end{equation}

The operator $B_{y_0}$ contains no frequency-dependent loss of
stability. Indeed,
\[
B_{y_0}\phi_n
=
(1-e^{-ny_0})\phi_n,
\]
and its eigenvalues are uniformly separated from zero.

\begin{proposition}\label{prop:By}
For every fixed $y_0>0$, the operator
\[
B_{y_0}=I-e^{-y_0\sqrt L}
\]
is bounded, self-adjoint, positive, and boundedly invertible on
$L^2(0,\pi)$. Moreover,
\begin{equation}\label{eq:By_bounds}
(1-e^{-y_0})\|v\|^2
\leq
(B_{y_0}v,v)
\leq
\|v\|^2,
\qquad v\in L^2(0,\pi),
\end{equation}
and
\begin{equation}\label{eq:By_inverse_norm}
\|B_{y_0}^{-1}\|_{\mathcal{L}(L^2,L^2)}
=
\frac{1}{1-e^{-y_0}}.
\end{equation}
\end{proposition}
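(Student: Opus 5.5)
We work entirely in the orthonormal eigenbasis $\{\phi_n\}_{n\ge1}$ of the Dirichlet operator $L$ from \eqref{eq:def_L}, which is complete in $L^2(0,\pi)$. In this basis $B_{y_0}$ is a multiplication operator. We define it, as in \eqref{eq:By_equation}, through the functional calculus,
\[
B_{y_0}v=\sum_{n=1}^\infty \beta_n v_n\phi_n,\qquad \beta_n:=1-e^{-ny_0},\qquad v_n=(v,\phi_n).
\]
Every claim then reduces to elementary properties of the real sequence $(\beta_n)$. Since $y_0>0$, the map $n\mapsto e^{-ny_0}$ is strictly decreasing, so $\beta_n$ is increasing in $n$. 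This gives
\[
1-e^{-y_0}=\beta_1\le\beta_n<1\qquad\text{for all } n\ge 1.
\]

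\emph{Boundedness and self-adjointness.} By Parseval, $\|B_{y_0}v\|^2=\sum\beta_n^2|v_n|^2\le\|v\|^2$, so $B_{y_0}$ is bounded with norm at most $1$. Because the $\beta_n$ are real, for $v,w\in L^2(0,\pi)$ we have
\[
(B_{y_0}v,w)=\sum\beta_n v_n\overline{w_n}=(v,B_{y_0}w),
\]
so $B_{y_0}$ is self-adjoint.

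\emph{The quadratic form bounds \eqref{eq:By_bounds}.} Parseval gives $(B_{y_0}v,v)=\sum\beta_n|v_n|^2$. Inserting $\beta_1\le\beta_n\le1$ termwise yields \eqref{eq:By_bounds}. Positivity (indeed coercivity) follows, because $1-e^{-y_0}>0$.

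\emph{Invertibility and the norm identity \eqref{eq:By_inverse_norm}.} Define the candidate inverse
\[
Cv=\sum\beta_n^{-1}v_n\phi_n .
\]
Since $\beta_n^{-1}\le\beta_1^{-1}$, $C$ is bounded with $\|C\|\le 1/(1-e^{-y_0})$. One checks directly that $CB_{y_0}=B_{y_0}C=I$ on the basis, hence on all of $L^2(0,\pi)$ by continuity. (Alternatively, invertibility follows from Lax--Milgram applied to the coercive form in \eqref{eq:By_bounds}.)

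For the reverse inequality, test $C$ on $\phi_1$: we get $\|C\phi_1\|=\beta_1^{-1}$. So the supremum $\sup_n\beta_n^{-1}=\beta_1^{-1}$ is attained, and equality holds in \eqref{eq:By_inverse_norm}.

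The only point needing care is the norm \emph{equality} rather than a mere bound. This is handled by the monotonicity of $\beta_n$, which makes $\beta_1$ the minimum and $\phi_1$ an extremal vector. Everything else is routine diagonal-operator bookkeeping.
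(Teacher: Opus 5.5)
Your proposal is correct and follows essentially the same route as the paper. Both diagonalize $B_{y_0}$ in the sine eigenbasis, use $1-e^{-y_0}\le 1-e^{-ny_0}<1$ to get the quadratic-form bounds and the explicit inverse, and read off the inverse norm from the $n=1$ multiplier; you additionally spell out boundedness, self-adjointness, and the extremal vector $\phi_1$ for the norm equality, which the paper leaves implicit.
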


\begin{proof}
For
\[
v=\sum_{n=1}^{\infty}v_n\phi_n,
\]
one has
\[
(B_{y_0}v,v)
=
\sum_{n=1}^{\infty}
(1-e^{-ny_0})|v_n|^2.
\]
Since
\[
1-e^{-y_0}
\leq
1-e^{-ny_0}
<1,
\qquad n\geq1,
\]
inequality \eqref{eq:By_bounds} follows immediately. In particular,
$B_{y_0}$ is boundedly invertible, and
\[
B_{y_0}^{-1}v
=
\sum_{n=1}^{\infty}
\frac{v_n}{1-e^{-ny_0}}\phi_n.
\]
The largest inverse multiplier is attained at $n=1$, which gives
\eqref{eq:By_inverse_norm}.
\end{proof}

Consequently, the inverse source problem admits the decomposition
\begin{equation}\label{eq:two_step_inverse}
g^\delta
\quad
\xrightarrow{\text{regularized second differentiation}}
\quad
q^\delta
\quad
\xrightarrow{\,B_{y_0}^{-1}\,}
\quad
f^\delta .
\end{equation}
The first step is ill-posed and requires regularization, whereas the
second step is well posed for every fixed $y_0>0$. Thus, the entire
frequency-dependent instability of the inverse source problem is
concentrated in the recovery of the second derivative of the measured
data.

This decomposition also gives a direct error-transfer estimate. If
$\widetilde q$ is an approximation to $q$ and
\[
\widetilde f=B_{y_0}^{-1}\widetilde q,
\]
then
\begin{equation}\label{eq:error_transfer}
\|f-\widetilde f\|
\leq
\frac{1}{1-e^{-y_0}}
\|q-\widetilde q\|.
\end{equation}
Therefore, any convergence estimate for the regularized approximation of
$-g''$ immediately yields a corresponding estimate for the recovered
source.

\begin{remark}\label{rem:height}
Although $B_{y_0}^{-1}$ is bounded for every fixed $y_0>0$, its norm
satisfies
\[
\|B_{y_0}^{-1}\|
=
\frac{1}{1-e^{-y_0}}
\sim \frac{1}{y_0}
\qquad\text{as }y_0\to0.
\]
Hence, measurements taken very close to the boundary $y=0$ may lead to a
larger amplification of the low-frequency errors. This does not change
the order-two ill-posedness of the original problem, but it influences
the conditioning of the well-posed correction step.
\end{remark}

For later use, it is also convenient to write
\[
\frac{1}{1-e^{-ny_0}}
=
1+\frac{1}{e^{ny_0}-1}.
\]
Defining
\[
R_{y_0}:=(e^{y_0\sqrt L}-I)^{-1},
\]
we obtain
\begin{equation}\label{eq:f_q_decomposition}
f=q+R_{y_0}q.
\end{equation}
The eigenvalues of $R_{y_0}$ are
\[
\frac{1}{e^{ny_0}-1},
\]
which decay exponentially as $n\to\infty$. Thus, $R_{y_0}q$ is a smooth
correction to $q$. This observation is particularly useful for analyzing
the local singularities of the source.

\subsection{Preservation of piecewise-smooth structures}
\label{subsec:structure}

We now investigate the relation between the piecewise-smooth structures
of $f$ and
\[
q=B_{y_0}f.
\]
Let
\[
P_{y_0}:=e^{-y_0\sqrt L}.
\]
Then
\begin{equation}\label{eq:q_f_relation}
q=f-P_{y_0}f.
\end{equation}
The operator $P_{y_0}$ has an exponential smoothing effect in the
$x$-variable.

\begin{lemma}\label{lem:Poisson_smoothing}
For every $f\in L^2(0,\pi)$ and $y_0>0$, the function
\[
P_{y_0}f
=
\sum_{n=1}^{\infty}e^{-ny_0}f_n\phi_n
\]
is real analytic in the interior of $(0,\pi)$.
\end{lemma}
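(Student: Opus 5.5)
We will show that the sine series defining $P_{y_0}f$ extends holomorphically to a complex strip around the real axis. Writing $z=x+i\tau$, we have $|\sin(nz)|\le e^{n|\tau|}$. Since $f\in L^2(0,\pi)$, Parseval gives $|f_n|\le\|f\|$ for all $n$. Hence, for $|\operatorname{Im}z|\le\tau_0<y_0$,
\[
\Bigl|e^{-ny_0}f_n\sqrt{\tfrac{2}{\pi}}\sin(nz)\Bigr|
\le
\sqrt{\tfrac{2}{\pi}}\,\|f\|\,e^{-n(y_0-\tau_0)} .
\]
The right-hand side is summable in $n$.

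By the Weierstrass M-test, the series
\[
F(z)=\sum_{n\ge1}e^{-ny_0}f_n\phi_n(z)
\]
then converges uniformly on the closed strip $S_{\tau_0}=\{|\operatorname{Im}z|\le\tau_0\}$ for every $\tau_0<y_0$. Each term is entire, so $F$ is holomorphic in the open strip $\{|\operatorname{Im}z|<y_0\}$ by Weierstrass' theorem on uniform limits of holomorphic functions.

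It remains to identify $F$ with $P_{y_0}f$. On the real segment the series converges uniformly, hence also in $L^2(0,\pi)$. Therefore its pointwise sum coincides with the $L^2$ element $P_{y_0}f$, which fixes the a.e.-defined function. The restriction of a holomorphic function to a real interval is real analytic, since its Taylor expansion at any real point has a positive radius of convergence (at least $y_0$) and real-valued coefficients when $f$ is real. This gives real analyticity on all of $[0,\pi]$, and in fact on $\mathbb R$ as an odd $2\pi$-periodic function, which is stronger than the interior statement required.

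There is no real obstacle here; the only point needing care is this identification of the $L^2$-defined operator output with the pointwise sum of the series. Uniform convergence on the real segment settles it.
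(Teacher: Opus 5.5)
Your argument is correct and is essentially the paper's proof: you bound $|\sin(nz)|$ by $e^{n\rho}$ on the strip $|\operatorname{Im}z|\le\rho<y_0$, obtain a geometrically decaying majorant, and conclude that the sum is holomorphic on $\{|\operatorname{Im}z|<y_0\}$. The differences are minor: you use $|f_n|\le\|f\|$ with the Weierstrass M-test where the paper invokes Cauchy--Schwarz, and you spell out two steps the paper leaves implicit, namely identifying the pointwise sum with the $L^2$ element $P_{y_0}f$ and passing from holomorphic to real analytic.
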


\begin{proof}
Let $0<\rho<y_0$. For a complex variable $z$ satisfying
$|\operatorname{Im}z|\leq\rho$, one has
\[
|\sin(nz)|\leq C_\rho e^{n\rho}.
\]
Hence,
\[
|e^{-ny_0}f_n\phi_n(z)|
\leq
C_\rho |f_n|e^{-n(y_0-\rho)}.
\]
The series on the right-hand side is summable by the
Cauchy--Schwarz inequality. Therefore, the Fourier series defining
$P_{y_0}f$ converges uniformly on compact subsets of
\[
\{z\in\mathbb{C}:|\operatorname{Im}z|<y_0\},
\]
and its sum is holomorphic there.
\end{proof}

Suppose that $f$ is piecewise smooth with a finite set of unknown
breakpoints
\begin{equation}\label{eq:source_partition}
0<\xi_1<\xi_2<\cdots<\xi_J<\pi.
\end{equation}
For a function $v$ with one-sided limits at $\xi_j$, we denote
\[
[v]_{\xi_j}
=
v(\xi_j^+)-v(\xi_j^-).
\]

\begin{theorem}\label{thm:structure_preservation}
Let $f\in L^2(0,\pi)$ be piecewise $C^m$ on the partition determined by
\eqref{eq:source_partition}, and let
\[
q=(I-P_{y_0})f
\]
for some fixed $y_0>0$. Then $f$ and $q$ have the same interior singular
support:
\begin{equation}\label{eq:singular_support}
\operatorname{sing\,supp}(q)
=
\operatorname{sing\,supp}(f).
\end{equation}
Moreover, at each breakpoint $\xi_j$,
\begin{equation}\label{eq:jump_preservation}
[q^{(k)}]_{\xi_j}
=
[f^{(k)}]_{\xi_j},
\qquad k=0,1,\ldots,m,
\end{equation}
whenever the corresponding one-sided derivatives exist.
\end{theorem}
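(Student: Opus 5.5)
The plan is to rely entirely on the identity \eqref{eq:q_f_relation}, $q=f-P_{y_0}f$, together with Lemma~\ref{lem:Poisson_smoothing}. Set $h:=P_{y_0}f$. By Lemma~\ref{lem:Poisson_smoothing}, $h$ is real analytic on $(0,\pi)$, and in particular $h\in C^\infty(0,\pi)$. Thus $q$ differs from $f$ by a function that is smooth on the whole open interval, and both claims should follow from this.

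For \eqref{eq:singular_support}, I will use the definition of the interior singular support: a point $x_0\in(0,\pi)$ lies outside $\operatorname{sing\,supp}(v)$ exactly when $v$ coincides a.e. with a $C^\infty$ function on some neighbourhood of $x_0$. Suppose $x_0\notin\operatorname{sing\,supp}(f)$. Then on a neighbourhood $U$ of $x_0$ we have $f|_U\in C^\infty(U)$, so $q|_U=f|_U-h|_U\in C^\infty(U)$, and therefore $x_0\notin\operatorname{sing\,supp}(q)$. The converse follows in the same way from $f=q+h$. Hence the two complements agree, and \eqref{eq:singular_support} holds. I would also note that, since $f$ is piecewise $C^m$, this common set is contained in $\{\xi_1,\dots,\xi_J\}$ (interpreting smoothness at the level of $C^m$ if $m<\infty$, with the same argument). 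No information about which $\xi_j$ are genuine singularities is needed.

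For \eqref{eq:jump_preservation}, fix $\xi_j$ and $0\le k\le m$, and assume that the one-sided limits $f^{(k)}(\xi_j^\pm)$ exist. On each open subinterval adjacent to $\xi_j$, $q^{(k)}=f^{(k)}-h^{(k)}$ holds pointwise, since both $f$ and $h$ are $C^k$ there. The function $h^{(k)}$ is continuous at $\xi_j$, so $h^{(k)}(\xi_j^+)=h^{(k)}(\xi_j^-)=h^{(k)}(\xi_j)$. Consequently the one-sided limits of $q^{(k)}$ exist exactly when those of $f^{(k)}$ exist, and
\[
[q^{(k)}]_{\xi_j}=[f^{(k)}]_{\xi_j}-[h^{(k)}]_{\xi_j}=[f^{(k)}]_{\xi_j}.
\]

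The only delicate point I expect is bookkeeping. Since $q$ is defined as an $L^2$ element through its Fourier series, I must identify it with the pointwise representative $f-h$ on each smooth piece. Because $h$ is a genuine analytic function by Lemma~\ref{lem:Poisson_smoothing}, choosing the representative $q:=f-h$ settles this, and the derivatives and one-sided limits are then classical.
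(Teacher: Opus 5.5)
Your proposal is correct and follows the paper's proof. Both write $q=f-P_{y_0}f$ and use Lemma~\ref{lem:Poisson_smoothing} to see that every derivative jump of $P_{y_0}f$ vanishes; the singular-support equality is then argued in both directions via $q=f-P_{y_0}f$ and $f=q+P_{y_0}f$, and your extra care about the pointwise representative and the existence of one-sided limits of $q^{(k)}$ is a harmless refinement.
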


\begin{proof}
By Lemma~\ref{lem:Poisson_smoothing}, $P_{y_0}f$ is analytic in
$(0,\pi)$. Therefore,
\[
[(P_{y_0}f)^{(k)}]_{\xi_j}=0
\]
for every $k\geq0$. It follows from \eqref{eq:q_f_relation} that
\[
[q^{(k)}]_{\xi_j}
=
[f^{(k)}]_{\xi_j}
-
[(P_{y_0}f)^{(k)}]_{\xi_j}
=
[f^{(k)}]_{\xi_j}.
\]
This proves \eqref{eq:jump_preservation}.

If $f$ is smooth in a neighborhood of an interior point, then
$q=f-P_{y_0}f$ is also smooth there. Conversely, if $q$ were smooth in a
neighborhood of a singular point of $f$, then
\[
f=q+P_{y_0}f
\]
would also be smooth in that neighborhood, since $P_{y_0}f$ is analytic.
This contradiction proves \eqref{eq:singular_support}.
\end{proof}

Theorem~\ref{thm:structure_preservation} shows that $q=-g''$ retains
the location, order, and amplitude of every interior singularity of $f$.
This permits the breakpoints to be detected from a regularized local
approximation of $q$, after which differentiation is performed separately on
the smooth components and the source is recovered from the well-posed
equation $B_{y_0}f=q$.

\begin{remark}[Differential preconditioning]\label{rem:differential_preconditioning}
The reduction is not specific to Fourier extension. If an inverse source map
$g=\mathcal Af$ admits an operator $D$ such that
$D\mathcal A=I-\mathcal S$, where $\mathcal S$ smooths across the interfaces
and $I-\mathcal S$ is boundedly invertible, then $Dg$ and $f$ share their
interior jump data. Whether such a factorization exists depends on the PDE,
observation geometry, and source parameterization.
\end{remark}

\section{Localized Fourier extension method for source reconstruction}
\label{sec:method}

Section~\ref{sec:direct} reduces the inverse problem to regularized
recovery of $q=-g''$ followed by $B_{y_0}f=q$. The proposed method uses the
same localized Fourier extension framework for two tasks: detecting the
breakpoints by regularized conjugate sums on staggered partitions, and
computing the second derivative independently on the resulting smooth
components. Fixed normalized grids allow the local matrices and their GTSVD
factorizations to be reused. The source is finally obtained from a smooth
spectral correction.

\subsection{Discrete observations and the localized Fourier extension framework}
\label{subsec:local_LFE}

Let
\begin{equation}\label{eq:sampling_nodes}
x_i=ih,
\qquad
h=\frac{\pi}{M},
\qquad
i=0,1,\ldots,M,
\end{equation}
be a uniform grid on $[0,\pi]$. The measured data are assumed to have the
form
\begin{equation}\label{eq:discrete_noisy_data}
g_i^\delta=g(x_i)+\eta_i,
\qquad
i=0,1,\ldots,M,
\end{equation}
where $\eta_i$ denotes the measurement noise. We write
\[
\mathbf g^\delta
=
(g_0^\delta,g_1^\delta,\ldots,g_M^\delta)^{\mathrm T}.
\]

Consider a local interval
\[
I=[a,b]\subset[0,\pi].
\]
For an extension parameter $T>1$, it is mapped onto
\[
\Lambda_T=\left[0,\frac{2\pi}{T}\right]
\]
by
\begin{equation}\label{eq:local_mapping}
t=t_I(x)
=
\frac{2\pi(x-a)}{T(b-a)}.
\end{equation}
Let
\[
\varphi_\ell(t)=e^{\mathrm i\ell t},
\qquad |\ell|\leq n,
\]
and approximate $g$ on $I$ by the local Fourier extension
\begin{equation}\label{eq:local_FE}
p_I(x)
=
\sum_{\ell=-n}^{n}
c_{\ell,I}\,
\varphi_\ell\bigl(t_I(x)\bigr).
\end{equation}

Suppose that $m$ local sampling points are used. The corresponding discrete
Fourier extension system is
\begin{equation}\label{eq:local_FE_system}
F\mathbf c_I=\mathbf g_I^\delta,
\end{equation}
where
\[
F_{r,\ell}=\varphi_\ell(t_r),
\qquad
\mathbf c_I=(c_{-n,I},\ldots,c_{n,I})^{\mathrm T}.
\]
Intervals belonging to the same computational stage have the same length,
reference mapping, and normalized sampling pattern. Their local matrices are
therefore identical. The corresponding matrix decomposition can be
precomputed and reused throughout that stage
\cite{ZhaoWang2026,ZhaoWangLiu2025}.

For numerical differentiation, we use the weighted GTSVD regularization
introduced in \cite{ZhaoWangLiu2025}. Let
\[
W_{\rm rec}=\operatorname{diag}\{e^{|\ell|}\}_{\ell=-n}^{n}
\]
and suppose that
\begin{equation}\label{eq:G_svd}
FW_{\rm rec}^{-1}
=
U\Sigma V^*,
\qquad
\Sigma=\operatorname{diag}(\sigma_1,\ldots,\sigma_{2n+1}),
\end{equation}
with
\[
\sigma_1\geq\sigma_2\geq\cdots\geq\sigma_{2n+1}\geq0.
\]
For a truncation index $\nu$, the local coefficient vector is defined by
\begin{equation}\label{eq:GTSVD_coefficients}
\mathbf c_{I,\nu}^{\delta}
=
W_{\rm rec}^{-1}
\sum_{r=1}^{\nu}
\frac{\langle\mathbf g_I^\delta,\mathbf u_r\rangle}
{\sigma_r}\mathbf v_r.
\end{equation}
The index $\nu$ is selected according to a local discrepancy principle,
\begin{equation}\label{eq:local_discrepancy}
\left\|
F\mathbf c_{I,\nu}^{\delta}-\mathbf g_I^\delta
\right\|_2
\leq
\tau_{\rm rec}\delta_I
<
\left\|
F\mathbf c_{I,\nu-1}^{\delta}-\mathbf g_I^\delta
\right\|_2,
\qquad \tau_{\rm rec}>1,
\end{equation}
where $\delta_I$ is an estimate of the discrete noise norm on the current
local interval. If the target residual cannot be attained using the reliable
singular components, all components satisfying
$\sigma_r\geq\epsilon_{\rm sv}\sigma_1$ are retained.

The derivatives of the local Fourier extension are evaluated analytically.
Since
\[
\frac{d^2}{dx^2}
\varphi_\ell\bigl(t_I(x)\bigr)
=
-
\left(
\frac{2\pi\ell}{T(b-a)}
\right)^2
\varphi_\ell\bigl(t_I(x)\bigr),
\]
the local approximation of $q=-g''$ is
\begin{equation}\label{eq:local_q_reconstruction}
q_I^\delta(x)
=
\sum_{\ell=-n}^{n}
\left(
\frac{2\pi\ell}{T(b-a)}
\right)^2
c_{\ell,I}^{\delta}
\varphi_\ell\bigl(t_I(x)\bigr).
\end{equation}
Formula~\eqref{eq:local_q_reconstruction} also explains why excessive
subdivision should be avoided. Although localization reduces the effective
oscillation on each interval, the second derivative introduces the scaling
factor $(b-a)^{-2}$. Additional subdivision is therefore introduced only
when the current local Fourier space cannot represent the data to the
prescribed tolerance.

\begin{remark}[Stage-dependent extension parameters]
\label{rem:stage_dependent_T}
Detection and differentiation use the same local representation but have
different objectives. We use $T_{\rm det}=2$ to retain detector bandwidth
and $T_{\rm rec}=6$ to reduce endpoint effects in smooth-component
differentiation. Both values are fixed for all tests, and each associated
matrix factorization is reused.
\end{remark}

\subsection{Localized breakpoint detection on staggered partitions}
\label{subsec:breakpoint_detection}

According to Theorem~\ref{thm:structure_preservation}, $q=-g''$ and $f$
have the same interior singular support. The breakpoints can therefore be
detected directly from the noisy observation $g^\delta$ without first
constructing a global approximation of $q$.

Let
\[
H=\frac{\pi}{K}
\]
be a prescribed localization length. We introduce the primary partition
\begin{equation}\label{eq:primary_detection_partition}
\mathcal P_0
=
\{I_k\}_{k=1}^{K},
\qquad
I_k=[(k-1)H,kH],
\end{equation}
and the staggered partition
\begin{equation}\label{eq:staggered_detection_partition}
\mathcal P_{1/2}
=
\{\widetilde I_k\}_{k=1}^{K-1},
\qquad
\widetilde I_k
=
\left[
\left(k-\frac12\right)H,
\left(k+\frac12\right)H
\right].
\end{equation}
The detection stage therefore uses only $2K-1$ fixed-length local intervals.
The second family is not introduced as an independent moving-window
procedure. It is a half-cell translation of the primary local partition and
has the sole purpose of resolving the ambiguity at local interfaces. Every
interior interface of $\mathcal P_0$ lies in the interior of an interval of
$\mathcal P_{1/2}$, and conversely. Hence, an interior breakpoint lies away
from the local endpoints in at least one of the two representations.

\subsubsection{Coarse localization by local conjugate sums}

On each interval $J=[a_J,b_J]\in\mathcal P_0\cup\mathcal P_{1/2}$, we
select the same number $m_c$ of equally spaced data nodes and use a fixed
low-order Fourier space with $n_c$ modes. For the detector, it is convenient
to use the real representation
\begin{equation}\label{eq:detection_local_FE}
p_J^\delta(x)
=
a_{0,J}
+
\sum_{\ell=1}^{n_c}
\left[
a_{\ell,J}\cos\bigl(\ell t_J(x)\bigr)
+
b_{\ell,J}\sin\bigl(\ell t_J(x)\bigr)
\right],
\end{equation}
where $t_J$ is given by \eqref{eq:local_mapping} with
$T=T_{\rm det}$.

The detector requires stable Fourier coefficients of a second derivative.
We therefore use a Sobolev-weighted GTSVD. Let
\[
A_c
=
\begin{bmatrix}
\cos(t_r) & \cdots & \cos(n_ct_r) &
\sin(t_r) & \cdots & \sin(n_ct_r)
\end{bmatrix}_{r=1}^{m_c},
\]
\[
P_c
=
I-\frac{1}{m_c}\mathbf 1\mathbf 1^{\mathrm T},
\qquad
D_p
=
\operatorname{diag}
\bigl(1^p,\ldots,n_c^p,1^p,\ldots,n_c^p\bigr),
\]
where $p=2+\beta$ with $0<\beta<1/2$. We precompute
\begin{equation}\label{eq:detection_GTSVD}
P_cA_cD_p^{-1}
=
U_c\Sigma_cV_c^{\mathrm T}.
\end{equation}
For the centered local data
$\mathbf y_{J,c}^\delta=P_c\mathbf g_{J,c}^\delta$, the truncated
coefficient vector is
\begin{equation}\label{eq:detection_coefficients}
\mathbf z_{J,\nu}^{\delta}
=
D_p^{-1}
\sum_{r=1}^{\nu}
\frac{\langle\mathbf y_{J,c}^\delta,\mathbf u_{c,r}\rangle}
{\sigma_{c,r}}\mathbf v_{c,r},
\end{equation}
where
\[
\mathbf z_{J,\nu}^{\delta}
=
(a_{1,J},\ldots,a_{n_c,J},b_{1,J},\ldots,b_{n_c,J})^{\mathrm T}.
\]
The constant coefficient is recovered from
\begin{equation}\label{eq:detection_constant_coefficient}
a_{0,J}
=
\frac{1}{m_c}\mathbf 1^{\mathrm T}
\left(
\mathbf g_{J,c}^\delta-A_c\mathbf z_{J,\nu}^{\delta}
\right).
\end{equation}
The smallest reliable truncation index satisfying
\begin{equation}\label{eq:detection_discrepancy}
\left\|
a_{0,J}\mathbf 1+A_c\mathbf z_{J,\nu}^{\delta}
-
\mathbf g_{J,c}^\delta
\right\|_2
\leq
\tau_{\rm det}\delta_{J,c}
\end{equation}
is selected. If the discrepancy target cannot be attained, the largest
reliable index determined by
$\sigma_{c,r}\geq\epsilon_{\rm sv}\sigma_{c,1}$ is used. Since every coarse
interval has the same length, number of modes, and normalized sampling
pattern, the factorization~\eqref{eq:detection_GTSVD} is reused on all
$2K-1$ intervals.

Set
\[
\mu_J=\frac{2\pi}{T_{\rm det}(b_J-a_J)}.
\]
The cosine and sine coefficients of the local approximation to $q=-g''$
are then
\begin{equation}\label{eq:detection_q_coefficients}
a_{\ell,J}^{(q)}
=(\mu_J\ell)^2a_{\ell,J},
\qquad
b_{\ell,J}^{(q)}
=(\mu_J\ell)^2b_{\ell,J}.
\end{equation}
The mollified conjugate Fourier sum used for edge detection follows the
spectral-mollifier construction of Cochran, Gelb, and Wang
\cite{CochranGelbWang2013}. In the present inverse source problem, however,
the Fourier coefficients of $q=-g''$ are not directly available. They are
obtained from the noisy samples of $g$ through the local GTSVD Fourier
extension described above. This combination is consistent with the strategy
in \cite{ZhaoYuJiaDou2025}, where regularized Fourier extension was used to
recover derivative coefficients from noisy data before applying the edge
detector. We then define
\begin{equation}\label{eq:local_conjugate_sum}
\mathcal C_J(x)
=
\sum_{\ell=1}^{n_c}
\ell\widehat\sigma_{\lambda_{n_c}}(\ell)
\left[
b_{\ell,J}^{(q)}
\cos\bigl(\ell t_J(x)\bigr)
-
a_{\ell,J}^{(q)}
\sin\bigl(\ell t_J(x)\bigr)
\right],
\qquad x\in J,
\end{equation}
where
\begin{equation}\label{eq:concentration_factor}
\lambda_{n_c}
=
\frac{n_c}{\sqrt{\log n_c}},
\qquad
\widehat\sigma_{\lambda_{n_c}}(\ell)
=
\frac{1}{\sqrt\pi\lambda_{n_c}}
\exp\left[-\left(\frac{\ell}{\lambda_{n_c}}\right)^2\right].
\end{equation}
In a smooth part of $q$, $\mathcal C_J$ remains moderate, whereas an
interior jump produces a localized peak whose magnitude is related to the
jump height.

To avoid artificial endpoint responses, the coarse indicator is evaluated
only on the interior portion
\[
J^\circ
=
[a_J+\gamma H,b_J-\gamma H],
\qquad 0<\gamma<\frac12.
\]
Let $\mathcal V_c$ denote all values $|\mathcal C_J(x_i)|$ with
$J\in\mathcal P_0\cup\mathcal P_{1/2}$ and $x_i\in J^\circ$. We define the
robust coarse threshold
\begin{equation}\label{eq:coarse_detection_threshold}
\theta_c
=
\operatorname{median}(\mathcal V_c)
+
\kappa_c\operatorname{MAD}(\mathcal V_c),
\end{equation}
where the median absolute deviation is multiplied by its usual consistency
factor. Local maxima of $|\mathcal C_J|$ above $\theta_c$ are retained as
coarse candidates. Candidates from neighboring intervals and from the two staggered
partitions are merged when their distance is below $d_c$. This produces a
coarse candidate set
\[
\mathcal S_c
=
\{\xi_1^{c},\ldots,\xi_{J_c}^{c}\}.
\]
No prior information about the number of breakpoints is used.

\subsubsection{Fine localization on the original grid}

For each coarse candidate $\xi_j^c$, we select a refinement interval
$J_j^f\subset[0,\pi]$ of length $H$, centered at $\xi_j^c$ whenever the
physical boundary permits. Unlike the coarse stage, all available original
grid nodes in $J_j^f$ are used. The physical localization length is kept
unchanged; only the sampling density and, when justified by the noise level,
the local Fourier resolution are increased.

First, the coarse Fourier dimension $n_c$ is used on the full local grid to
obtain an anchor point $\xi_j^a$. The anchor is the largest conjugate-sum
peak in a fixed neighborhood of $\xi_j^c$. A local threshold of the form
$\operatorname{median}+\kappa_f\operatorname{MAD}$ is used to confirm the
candidate. This low-order full-grid step determines a stable local search
region and does not create additional breakpoints.

When the noise level permits, the local computation is repeated with a
moderately larger Fourier dimension $n_f\geq n_c$. The final point is the
dominant high-order conjugate-sum peak in
\begin{equation}\label{eq:high_order_refinement_region}
\left[
\xi_j^a-r_f,
\xi_j^a+r_f
\right],
\qquad
r_f
=
\min\left\{r_a,\frac{H}{n_f}\right\},
\end{equation}
where $r_a$ is the anchor search radius. A three-point quadratic
interpolation of the absolute peak values is used to obtain a subgrid
estimate. The role of the high-order calculation is only to sharpen the
location confirmed by the low-order anchor; it neither creates a new
candidate nor removes an already confirmed one.

After fine detections within distance $d_f$ are merged, the estimated
breakpoint set is
\begin{equation}\label{eq:estimated_breakpoint_set}
\widehat{\mathcal S}
=
\{\widehat\xi_1,\ldots,\widehat\xi_{\widehat J}\},
\qquad
0<\widehat\xi_1<\cdots<\widehat\xi_{\widehat J}<\pi.
\end{equation}
The coarse mode number is fixed throughout the detector, while $n_f$ is
increased only moderately as the noise level decreases. The specific values
used in the experiments are reported in the numerical section.

The following elementary estimate quantifies the stability of the fine
peak-location step. It is local in nature, which is consistent with the
algorithm: the coarse detector first isolates a candidate neighborhood, and
the fine stage only refines the dominant peak inside that neighborhood.

\begin{proposition}[Local stability of a detected breakpoint]
\label{prop:detector_peak_stability}
Let $U\subset J$ contain one breakpoint $\xi$, and let $\xi_J^\ast\in U$ be
an interior local maximizer of the exact finite-dimensional indicator
$s\mathcal C_J$, with $s\in\{-1,1\}$. Assume
\[
(s\mathcal C_J)'(\xi_J^\ast)=0,
\qquad
-(s\mathcal C_J)''(x)\geq\kappa_J>0,
\qquad x\in U.
\]
If $\widehat\xi$ is an interior local maximizer of
$s\mathcal C_J^\delta$ in $U$, then
\begin{equation}\label{eq:detector_peak_stability}
|\widehat\xi-\xi_J^\ast|
\leq
\frac{1}{\kappa_J}
\left\|(\mathcal C_J^\delta-\mathcal C_J)'\right\|_{L^\infty(U)}.
\end{equation}
If $\mathbf z_J^\delta$ and $\mathbf z_J$ are the regularized and exact
coefficient vectors in \eqref{eq:detection_coefficients}, respectively, then
\begin{equation}\label{eq:indicator_coefficient_stability}
\left\|(\mathcal C_J^\delta-\mathcal C_J)'\right\|_{L^\infty(U)}
\leq
C_{J,n,\sigma}\|\mathbf z_J^\delta-\mathbf z_J\|_2,
\end{equation}
where
\[
C_{J,n,\sigma}
=
\mu_J^3
\left(\sum_{\ell=1}^{n}
\ell^8\widehat\sigma_{\lambda_n}(\ell)^2\right)^{1/2}.
\]
Consequently,
\begin{equation}\label{eq:detector_total_location_error}
|\widehat\xi-\xi|
\leq
\frac{C_{J,n,\sigma}}{\kappa_J}
\|\mathbf z_J^\delta-\mathbf z_J\|_2
+|\xi_J^\ast-\xi|.
\end{equation}
The last term is the finite-dimensional detector bias.
\end{proposition}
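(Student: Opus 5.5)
The proof splits into three independent parts: a one-dimensional perturbation argument for the maximizer, a Cauchy--Schwarz bound for the derivative of the conjugate sum in terms of the coefficients, and a triangle inequality.

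\emph{Step 1: peak displacement.} Write $\psi=s\mathcal C_J$ and $\psi^\delta=s\mathcal C_J^\delta$. Since $\widehat\xi$ is an interior local maximizer of $\psi^\delta$, we have $(\psi^\delta)'(\widehat\xi)=0$, and also $\psi'(\xi_J^\ast)=0$. Then
\[
\psi'(\widehat\xi)-\psi'(\xi_J^\ast)=\psi'(\widehat\xi)-(\psi^\delta)'(\widehat\xi)=-s(\mathcal C_J^\delta-\mathcal C_J)'(\widehat\xi).
\]
I will treat $U$ as an interval; otherwise I replace it by the segment between the two points, which is where the concavity hypothesis is needed. By the mean value theorem,
\[
\psi'(\widehat\xi)-\psi'(\xi_J^\ast)=\psi''(\eta)(\widehat\xi-\xi_J^\ast)
\]
for some $\eta$ between the two points, and $|\psi''(\eta)|\ge\kappa_J$. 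Dividing gives \eqref{eq:detector_peak_stability}.

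\emph{Step 2: coefficient bound.} The indicator is linear in the coefficients, and $\mathcal C_J$ is built from $\mathbf z_J$ exactly as $\mathcal C_J^\delta$ is built from $\mathbf z_J^\delta$. Set $\Delta a_\ell$, $\Delta b_\ell$ for the coefficient differences. Differentiating \eqref{eq:local_conjugate_sum} with $t_J'(x)=\mu_J$ and using \eqref{eq:detection_q_coefficients} gives
\[
(\mathcal C_J^\delta-\mathcal C_J)'(x)=\sum_{\ell=1}^{n}\mu_J^3\ell^4\widehat\sigma_{\lambda_n}(\ell)\bigl[-\Delta b_\ell\sin(\ell t)-\Delta a_\ell\cos(\ell t)\bigr].
\]
For each $\ell$, the bracket is bounded by $(\Delta a_\ell^2+\Delta b_\ell^2)^{1/2}$ by Cauchy--Schwarz in $\mathbb R^2$. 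Applying Cauchy--Schwarz over $\ell$ then yields
\[
\mu_J^3\Bigl(\sum_{\ell=1}^n\ell^8\widehat\sigma_{\lambda_n}(\ell)^2\Bigr)^{1/2}\|\mathbf z_J^\delta-\mathbf z_J\|_2 ,
\]
which is \eqref{eq:indicator_coefficient_stability}. The constant coefficient $a_{0,J}$ does not appear, since it is annihilated by the second derivative.

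\emph{Step 3: total error.} Combining Steps 1 and 2 with the triangle inequality $|\widehat\xi-\xi|\le|\widehat\xi-\xi_J^\ast|+|\xi_J^\ast-\xi|$ gives \eqref{eq:detector_total_location_error}.

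The only delicate point is Step 1. There I must ensure the mean value point $\eta$ lies in $U$, so that the uniform concavity bound applies, which holds if $U$ is an interval. I must also use the correct sign, which holds because $-\psi''\ge\kappa_J$ forces $|\psi''|\ge\kappa_J$.
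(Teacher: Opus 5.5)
Your proof is correct and follows the paper's argument essentially step for step. The paper applies the mean value theorem to $(s\mathcal C_J)'$ between $\xi_J^\ast$ and $\widehat\xi$ with the curvature bound, then differentiates the conjugate sum (your explicit factor $\mu_J^3\ell^4\widehat\sigma_{\lambda_n}(\ell)$ is right) and applies Cauchy--Schwarz, and finishes with the triangle inequality. Your extra details, including the requirement that $U$ be an interval so the mean-value point lies where the curvature bound holds, fill in what the paper leaves implicit; note that replacing a non-interval $U$ by the segment would not help, since the concavity hypothesis is only assumed on $U$.
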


\begin{proof}
Since $(\mathcal C_J^\delta)'(\widehat\xi)=0$, the mean-value theorem and
the curvature assumption yield
\[
\kappa_J|\widehat\xi-\xi_J^\ast|
\leq
|\mathcal C_J'(\widehat\xi)|
\leq
\| (\mathcal C_J^\delta-\mathcal C_J)'\|_{L^\infty(U)}.
\]
Differentiating \eqref{eq:local_conjugate_sum}, using
\eqref{eq:detection_q_coefficients}, and applying Cauchy--Schwarz gives
\eqref{eq:indicator_coefficient_stability}. Equation
\eqref{eq:detector_total_location_error} follows from the triangle
inequality.
\end{proof}

Proposition~\ref{prop:detector_peak_stability} separates the perturbation of
the regularized coefficients from the intrinsic finite-dimensional
localization bias. It is conditional on the coarse stage selecting the
correct one-breakpoint neighborhood; a global convergence analysis of the
detector would additionally require control of that selection step.

\subsection{Structure-aligned reconstruction of \texorpdfstring{$q$}{q}}
\label{subsec:aligned_q}

The detected breakpoints define the structural partition
\begin{equation}\label{eq:structural_partition}
0=\widehat\xi_0
<
\widehat\xi_1
<
\cdots
<
\widehat\xi_{\widehat J}
<
\widehat\xi_{\widehat J+1}=\pi,
\end{equation}
with smooth candidate components
\[
\widehat I_j
=
(\widehat\xi_{j-1},\widehat\xi_j),
\qquad
j=1,\ldots,\widehat J+1.
\]
The final reconstruction of $q=-g''$ is computed directly from the original
noisy observation $\mathbf g^\delta$, not from the conjugate-sum indicator.
No local fitting interval is permitted to cross a detected breakpoint.

Within a structural component $\widehat I_j$, we apply the adaptive
multi-interval LFE differentiation method. On a current interval $I\subset
\widehat I_j$, a fixed number $m_{\rm rec}$ of approximately equally spaced
nodes is selected, and the coefficients are computed from
\eqref{eq:GTSVD_coefficients} with $T=T_{\rm rec}$. Let $p_I^\delta$ be the
resulting approximation to $g$. Its residual on all available nodes of $I$
is
\begin{equation}\label{eq:local_residual}
r_I
=
\left(
\sum_{x_i\in I}
\left|
p_I^\delta(x_i)-g_i^\delta
\right|^2
\right)^{1/2}.
\end{equation}
The interval is accepted if
\begin{equation}\label{eq:adaptive_acceptance}
r_I
\leq
\rho\,\delta_I^{\rm all},
\qquad \rho>1,
\end{equation}
or if it contains at most $m_{
m rec}$ sampling points, or if the maximum
bisection depth $d_{\max}$ is reached. Otherwise, $I$ is bisected and the
same test is applied to its children. Such additional subdivision
is used only to resolve local oscillation inside an already identified smooth
component and has no structural interpretation.

On every accepted leaf interval, $q_I^\delta$ is evaluated analytically by
\eqref{eq:local_q_reconstruction}. Patching the local values within the
structure-aligned partition gives
\begin{equation}\label{eq:final_q}
q_{M,\delta}(x)
=
q_{j,M,\delta}(x),
\qquad
x\in\widehat I_j.
\end{equation}
At a detected breakpoint, the left and right traces are retained separately.
The value assigned at the single breakpoint itself has no effect on the
$L^2$ reconstruction.

The structural separation is essential. A local Fourier extension is never
forced to represent two different smooth branches, while further
localization remains available inside each branch when required by the data.
Thus, the same localized representation principle underlies both breakpoint
detection and numerical differentiation, whereas the detected structure
determines where local representations may not be connected.

\subsection{Recovery of the source term}
\label{subsec:source_recovery}

After $q$ has been reconstructed, the source is obtained from
\[
B_{y_0}f=q.
\]
A direct spectral inversion gives
\[
f_n
=
\frac{q_n}{1-e^{-ny_0}}.
\]
For a discontinuous or piecewise-smooth source, however, approximating the
entire source by a truncated global sine series introduces Gibbs
oscillations. We therefore use the decomposition
\[
f=q+R_{y_0}q,
\qquad
R_{y_0}=(e^{y_0\sqrt L}-I)^{-1},
\]
derived in Section~\ref{sec:direct}.

Let $\{\phi_n\}_{n\geq1}$ be the normalized sine eigenfunctions of $L$ and
define
\begin{equation}\label{eq:q_sine_coefficients}
q_n^{M,\delta}
=
\bigl(q_{M,\delta},\phi_n\bigr)
=
\sum_{j=1}^{\widehat J+1}
\int_{\widehat I_j}
q_{j,M,\delta}(x)\phi_n(x)\,dx.
\end{equation}
Since every $q_{j,M,\delta}$ is a local trigonometric polynomial, the
integrals in \eqref{eq:q_sine_coefficients} can be evaluated analytically or
by sufficiently accurate quadrature on the individual structural
components.

Define the truncated smooth correction
\begin{equation}\label{eq:smooth_correction}
R_{y_0,N}q_{M,\delta}
=
\sum_{n=1}^{N}
\frac{q_n^{M,\delta}}{e^{ny_0}-1}
\phi_n.
\end{equation}
The final source reconstruction is
\begin{equation}\label{eq:final_source}
f_{M,N,\delta}(x)
=
q_{M,\delta}(x)
+
\sum_{n=1}^{N}
\frac{q_n^{M,\delta}}{e^{ny_0}-1}
\phi_n(x).
\end{equation}
The first term in \eqref{eq:final_source} retains the detected jumps and the
piecewise-smooth structure. The second term is globally smooth, and its
coefficients decay exponentially with $n$. Therefore, only a moderate number
of global sine modes is needed for the correction.

The following estimate separates the differentiation error from the
truncation error of the smooth correction.

\begin{proposition}\label{prop:source_error}
Let
\[
f=(I+R_{y_0})q
\]
and let $f_{M,N,\delta}$ be defined by \eqref{eq:final_source}. Then
\begin{equation}\label{eq:source_error_bound}
\|f-f_{M,N,\delta}\|
\leq
\frac{1}{1-e^{-y_0}}
\|q-q_{M,\delta}\|
+
\frac{1}{e^{(N+1)y_0}-1}\|q\|.
\end{equation}
\end{proposition}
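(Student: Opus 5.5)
The plan is to write the error as one operator applied to the differentiation error plus the tail of the smooth correction, and then bound each piece with Parseval's identity in the sine basis $\{\phi_n\}$. Let $\Pi_N$ be the orthogonal projection onto $\operatorname{span}\{\phi_1,\ldots,\phi_N\}$. Since $\Pi_N$ commutes with $R_{y_0}$, definition \eqref{eq:smooth_correction} reads $R_{y_0,N}=R_{y_0}\Pi_N$, which is a bounded diagonal operator. Using $f=q+R_{y_0}q$ and \eqref{eq:final_source}, I would add and subtract $R_{y_0,N}q$ to obtain
\[
f-f_{M,N,\delta}
=
(I+R_{y_0,N})(q-q_{M,\delta})
+
(R_{y_0}-R_{y_0,N})q .
\]

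For the first term, $I+R_{y_0,N}$ is diagonal in $\{\phi_n\}$. For $n\le N$ its multiplier is
\[
1+\frac{1}{e^{ny_0}-1}=\frac{1}{1-e^{-ny_0}},
\]
and for $n>N$ it is $1$. All multipliers are positive, and the largest is $1/(1-e^{-y_0})$, attained at $n=1$, exactly as in the proof of Proposition~\ref{prop:By}. Parseval then gives
\[
\|(I+R_{y_0,N})(q-q_{M,\delta})\|\le \frac{1}{1-e^{-y_0}}\,\|q-q_{M,\delta}\|.
\]
This uses only $q-q_{M,\delta}\in L^2(0,\pi)$, which holds because $q_{M,\delta}$ is piecewise a trigonometric polynomial. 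Its sine coefficients are therefore the $q_n^{M,\delta}$ in \eqref{eq:q_sine_coefficients}.

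For the second term, $(R_{y_0}-R_{y_0,N})q=\sum_{n>N}(e^{ny_0}-1)^{-1}q_n\phi_n$. The multipliers decrease in $n$, so Parseval bounds its norm by $(e^{(N+1)y_0}-1)^{-1}\|q\|$. The triangle inequality then yields \eqref{eq:source_error_bound}.

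The only point needing care is the treatment of the high modes $n>N$ of $q-q_{M,\delta}$. They are not corrected by $R_{y_0,N}$ but still appear through the identity part $q_{M,\delta}$. The decomposition above handles this automatically, since their multiplier is $1$, which is at most $1/(1-e^{-y_0})$. No step is a genuine obstacle: the whole argument reduces to comparing scalar multipliers.
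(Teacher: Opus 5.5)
Your proposal is correct and is essentially the paper's own proof: the same splitting $f-f_{M,N,\delta}=(I+R_{y_0,N})(q-q_{M,\delta})+(R_{y_0}-R_{y_0,N})q$, with both operator norms bounded by comparing diagonal multipliers in the sine basis. The only difference is minor: the paper bounds $\|I+R_{y_0,N}\|$ by $1+(e^{y_0}-1)^{-1}$ rather than computing each multiplier exactly, and this gives the same constant $1/(1-e^{-y_0})$.
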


\begin{proof}
Let $R_{y_0,N}$ denote the spectral truncation in
\eqref{eq:smooth_correction}. Then
\[
f-f_{M,N,\delta}
=
(I+R_{y_0,N})(q-q_{M,\delta})
+
(R_{y_0}-R_{y_0,N})q.
\]
Since
\[
\|I+R_{y_0,N}\|
\leq
1+\frac{1}{e^{y_0}-1}
=
\frac{1}{1-e^{-y_0}},
\]
and
\[
\|R_{y_0}-R_{y_0,N}\|
\leq
\frac{1}{e^{(N+1)y_0}-1},
\]
the result follows.
\end{proof}

We next combine Proposition~\ref{prop:source_error} with the piecewise
Fourier-extension differentiation estimate. The following statement isolates
the regularization error by assuming that the exact structural partition is
known. As usual, the spatial discretization and finite-dimensional
approximation are taken sufficiently fine that their errors are of lower
order than the regularization error.

\begin{theorem}[Convergence for exact breakpoints]
\label{thm:exact_breakpoint_convergence}
Let
\[
0=\xi_0<\xi_1<\cdots<\xi_J<\xi_{J+1}=\pi
\]
be the exact breakpoint partition, and assume that
\[
g|_{I_j}\in H^{s_j}(I_j),
\qquad
I_j=(\xi_{j-1},\xi_j),
\qquad
s_j\geq3.
\]
Set
\[
\bar s=\min_{1\leq j\leq J+1}s_j,
\qquad
\alpha_{\rm diff}=\frac{\bar s-2}{\bar s}.
\]
Let $q_{\delta}^{\mathcal S}$ be the structure-aligned regularized Fourier
extension approximation of $q=-g''$ on the exact partition, with the local
regularization parameters chosen by a discrepancy principle. Then
\begin{equation}\label{eq:exact_partition_q_rate}
\|q-q_{\delta}^{\mathcal S}\|
\leq
C_{\rm diff}\delta^{\alpha_{\rm diff}}.
\end{equation}
Define
\[
f_{\delta,N}^{\mathcal S}
=
(I+R_{y_0,N})q_{\delta}^{\mathcal S}.
\]
Then
\begin{equation}\label{eq:exact_partition_source_rate}
\|f-f_{\delta,N}^{\mathcal S}\|
\leq
\frac{C_{\rm diff}}{1-e^{-y_0}}
\delta^{\alpha_{\rm diff}}
+
\frac{\|q\|}{e^{(N+1)y_0}-1}.
\end{equation}
In particular, if $N=N(\delta)$ is chosen so that
\[
\frac{1}{e^{(N(\delta)+1)y_0}-1}
=
\mathcal O\!\left(\delta^{\alpha_{\rm diff}}\right),
\]
then
\begin{equation}\label{eq:exact_partition_final_rate}
\|f-f_{\delta,N(\delta)}^{\mathcal S}\|
=
\mathcal O\!\left(
\delta^{(\bar s-2)/\bar s}
\right).
\end{equation}
\end{theorem}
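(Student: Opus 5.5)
The argument has two parts. First we obtain the differentiation rate \eqref{eq:exact_partition_q_rate} componentwise. Then we transfer it to the source through Proposition~\ref{prop:source_error}.

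\textbf{Step 1: the differentiation rate on one component.} Fix a component $I_j=(\xi_{j-1},\xi_j)$. By assumption $g|_{I_j}\in H^{s_j}(I_j)$ with $s_j\ge 3$. The regularized Fourier extension on $I_j$ is exactly the weighted-GTSVD local differentiation scheme \eqref{eq:GTSVD_coefficients}--\eqref{eq:local_q_reconstruction}, with $\nu$ chosen by the discrepancy principle \eqref{eq:local_discrepancy}. Adaptive bisection inside $I_j$ only produces finitely many leaves, each again of this form. We therefore invoke the piecewise Fourier-extension differentiation estimate of \cite{ZhaoWangLiu2025}: for $v\in H^{s}$ on a fixed interval and data noise $\delta_I\lesssim\delta$, the discrepancy-chosen GTSVD approximation satisfies
\[
\|v''-(p^\delta_I)''\|_{L^2(I)}\le C_I\,\delta^{(s-2)/s}.
\]
This is the familiar interpolation between the $H^{s}$ source condition and the noise level, giving order $k$ derivatives at rate $\delta^{(s-k)/s}$. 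Discretization and finite-dimensional truncation errors are assumed to be of lower order, as stated before the theorem.

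\textbf{Step 2: assembling the bound for $q$.} Because the partition is exact, no fitting interval straddles a jump, so the smoothness hypothesis applies on every leaf. The number of leaves is bounded by $(J+1)2^{d_{\max}}$, independently of $\delta$. Squaring and summing over the components, and using $\delta<1$ together with $s_j\ge\bar s$, so that $\delta^{(s_j-2)/s_j}\le\delta^{(\bar s-2)/\bar s}$, gives \eqref{eq:exact_partition_q_rate} with $C_{\rm diff}=(\sum_j C_{I_j}^2)^{1/2}$.

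\textbf{Step 3: transfer to the source.} Apply Proposition~\ref{prop:source_error} with $q_{M,\delta}$ replaced by $q^{\mathcal S}_\delta$. Its proof uses only $f=(I+R_{y_0})q$ and the operator norm bounds, so it applies verbatim and yields \eqref{eq:exact_partition_source_rate}. Choosing $N(\delta)$ as in the statement makes the truncation term $\mathcal O(\delta^{\alpha_{\rm diff}})$ as well. For example, $N(\delta)\approx \alpha_{\rm diff}\,y_0^{-1}\log(1/\delta)$ suffices. This gives \eqref{eq:exact_partition_final_rate}.

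\textbf{Main obstacle.} The only substantive step is Step 1, namely justifying the local rate $\delta^{(s-2)/s}$ for the weighted GTSVD Fourier extension under the discrepancy rule. The key requirement is that the exact $H^{s}$ function lies in a source set adapted to the weight $W_{\rm rec}$. I would import this from \cite{ZhaoWangLiu2025} rather than reprove it. If a direct argument is needed, the plan is as follows:
\begin{itemize}
\item bound the data-propagation term by $\nu$-dependent singular-value growth;
\item bound the approximation term by the Fourier-extension approximation rate for $H^{s}$ functions;
\item balance the two through the discrepancy principle, using an interpolation inequality between $\|\cdot\|_{L^2}$ and $\|\cdot\|_{H^{s}}$ to convert residual control into second-derivative control.
\end{itemize}
The constraint $s_j\ge3$ ensures that the exponent is positive, with enough margin for the endpoint effects controlled by $T_{\rm rec}$.
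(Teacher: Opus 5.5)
Your proposal is correct and follows the same route as the paper. The paper takes \eqref{eq:exact_partition_q_rate} as the $k=2$ case of the piecewise Fourier-extension differentiation result of \cite{ZhaoYuJiaDou2025}, applies Proposition~\ref{prop:source_error} with $q_{M,\delta}=q_\delta^{\mathcal S}$, and then uses the prescribed choice of $N(\delta)$. The only difference is that you import a single-interval estimate (attributed to \cite{ZhaoWangLiu2025}) and assemble it over the components yourself via $\delta<1$ and $s_j\geq\bar s$; this is a harmless, slightly more explicit version of the same step.
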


\begin{proof}
Estimate~\eqref{eq:exact_partition_q_rate} is the $k=2$ case of the
piecewise Fourier-extension differentiation result in
\cite{ZhaoYuJiaDou2025}. Applying
Proposition~\ref{prop:source_error} with
$q_{M,\delta}=q_{\delta}^{\mathcal S}$ gives
\eqref{eq:exact_partition_source_rate}. The prescribed choice of
$N(\delta)$ makes the spectral truncation term no larger than the
differentiation term and yields \eqref{eq:exact_partition_final_rate}.
\end{proof}

\begin{remark}[Rate in terms of source regularity]
If $f|_{I_j}\in H^{r_j}(I_j)$, then $q=(I-P_{y_0})f$ has the same local
regularity and $g''=-q$ gives $g|_{I_j}\in H^{r_j+2}(I_j)$. Thus, with
$\bar r=\min_j r_j$, the exact-partition rate can be written as
$\mathcal O(\delta^{\bar r/(\bar r+2)})$.
\end{remark}

\begin{remark}[Stability under small interface displacements]
\label{rem:breakpoint_location_error}
Theorem~\ref{thm:exact_breakpoint_convergence} describes the differentiation
and correction errors on the exact structural partition. For a computed
partition, suppose that the detected breakpoints can be matched in order with
the exact ones and set
\[
\varepsilon_{\rm bp}
=
\max_{1\leq j\leq J}
|\widehat\xi_j-\xi_j|.
\]
The two partitions differ only on
\[
\Omega_{\rm bp}
=
\bigcup_{j=1}^{J}
\left[
\min\{\xi_j,\widehat\xi_j\},
\max\{\xi_j,\widehat\xi_j\}
\right],
\qquad
|\Omega_{\rm bp}|
\leq
J\varepsilon_{\rm bp}.
\]
If the exact-partition estimate holds on the common smooth region and the
exact and reconstructed branches remain uniformly bounded near the
interfaces, then
\begin{equation}\label{eq:breakpoint_perturbation_q}
\|q-q_{\delta}^{\widehat{\mathcal S}}\|
\leq
C_{\rm diff}\delta^{\alpha_{\rm diff}}
+
C_{\rm bp}\varepsilon_{\rm bp}^{1/2},
\end{equation}
and hence
\begin{equation}\label{eq:breakpoint_perturbation_f}
\|f-f_{\delta,N}^{\widehat{\mathcal S}}\|
\leq
\frac{
C_{\rm diff}\delta^{\alpha_{\rm diff}}
+
C_{\rm bp}\varepsilon_{\rm bp}^{1/2}}
{1-e^{-y_0}}
+
\frac{\|q\|}{e^{(N+1)y_0}-1}.
\end{equation}
Here $C_{\rm bp}$ depends on the jump magnitudes and on local bounds of the
neighboring smooth branches. Together with
Proposition~\ref{prop:detector_peak_stability}, this shows explicitly how a
perturbation of the local detector coefficients produces a localized
interface mismatch and how that mismatch is transferred through the
well-posed source correction. In particular, vanishing coefficient and peak
perturbations imply a vanishing partition contribution.
\end{remark}

Thus, once a structure-aligned approximation of $q$ is available, the final
source recovery adds only a bounded amplification, an exponentially small
spectral truncation term, and, for detected rather than exact breakpoints, a
localized mismatch contribution.

\subsection{Summary of the reconstruction procedure}
\label{subsec:algorithm_summary}

The complete reconstruction method is summarized in
Algorithm~\ref{alg:source_reconstruction}.

\begin{algorithm}[H]
\caption{Localized reconstruction of a piecewise-smooth source}
\label{alg:source_reconstruction}
\begin{algorithmic}[1]
\REQUIRE Noisy data $\mathbf g^\delta$, $y_0$, $H$, a noise estimate, and local parameters.

\STATE Construct the primary and staggered local partitions
$\mathcal P_0$ and $\mathcal P_{1/2}$.

\STATE Precompute the detector GTSVD and evaluate the local indicator
\eqref{eq:local_conjugate_sum} on $\mathcal P_0\cup\mathcal P_{1/2}$.

\STATE Extract significant local peaks using the robust threshold
\eqref{eq:coarse_detection_threshold}, and merge repeated candidates from
the two staggered partitions.

\STATE Confirm each candidate on the fine grid and refine its location with the noise-adaptive Fourier dimension.

\STATE Construct the structural partition from the detected breakpoint set
$\widehat{\mathcal S}$.

\STATE Apply adaptive structure-aligned LFE differentiation to the original data and obtain $q_{M,\delta}$.

\STATE Compute the piecewise sine coefficients $q_n^{M,\delta}$ from
\eqref{eq:q_sine_coefficients}.

\STATE Recover the source by
\[
f_{M,N,\delta}
=
q_{M,\delta}
+
\sum_{n=1}^{N}
\frac{q_n^{M,\delta}}{e^{ny_0}-1}\phi_n.
\]

\RETURN The estimated breakpoint set $\widehat{\mathcal S}$ and the
reconstructed source $f_{M,N,\delta}$.
\end{algorithmic}
\end{algorithm}

\section{Numerical experiments}
\label{sec:numerics}

We assess breakpoint localization, reconstruction of $q=-g''$, final
source recovery, and robustness to the noise distribution. The comparisons
use Yang's truncated Fourier method \cite{Yang2011} and full-grid first-order
TV regularization \cite{RudinOsherFatemi1992,AcarVogel1994,BenningBurger2018}.

\subsection{Experimental setting and test sources}
\label{subsec:numerical_setting}

The observation height is fixed at
\[
 y_0=0.7.
\]
We use $M_x=2305$ uniformly distributed nodes
\[
 x_i=\frac{i\pi}{M_x-1},
 \qquad i=0,1,\ldots,M_x-1,
\]
so that $h=\pi/2304$.  The test sources have the general form
\begin{equation}
\label{eq:test_source_general}
 f(x)=b_0+b_1\frac{x}{\pi}+A\sin x
 +\sum_{j=1}^{J}d_j\mathcal H(x-\xi_j),
\end{equation}
where $\mathcal H$ is the Heaviside function, $\xi_j$ are the exact
breakpoints, and $d_j$ are the corresponding jump heights.  The value
assigned at a breakpoint does not affect any of the reported $L^2$ errors.

To generate the observation without introducing an additional PDE
discretization error, we use the exact sine-series representation of the
forward solution.  With the convention
\[
 f(x)\sim\sum_{n=1}^{\infty}\widehat f_n\sin(nx),
 \qquad
 \widehat f_n=\frac{2}{\pi}\int_0^\pi f(x)\sin(nx)\,dx,
\]
the coefficients of \eqref{eq:test_source_general} are
\begin{equation}
\label{eq:test_source_coefficients}
\widehat f_n
=
\frac{2}{\pi}
\left[
 \frac{b_0\bigl(1-(-1)^n\bigr)}{n}
 -\frac{b_1(-1)^n}{n}
 +\sum_{j=1}^{J}
 \frac{d_j\bigl(\cos(n\xi_j)-(-1)^n\bigr)}{n}
\right]
+A\,\delta_{n1}.
\end{equation}
The exact observation and the corresponding transformed source are evaluated
from
\begin{align}
 g(x)
 &=
 \sum_{n=1}^{N_{\rm ex}}
 \frac{1-e^{-ny_0}}{n^2}\widehat f_n\sin(nx),
 \label{eq:exact_observation_series}\\
 q(x)
 &=
 f(x)-
 \sum_{n=1}^{N_{\rm ex}}
 e^{-ny_0}\widehat f_n\sin(nx),
 \label{eq:exact_q_series}
\end{align}
with $N_{\rm ex}=2400$.

The representative figures and the deterministic comparison table use
bounded uniform perturbations
\begin{equation}
\label{eq:numerical_noise}
 g_i^\delta=g(x_i)+\delta\zeta_i,
 \qquad
 \zeta_i\sim\operatorname{Unif}[-1,1],
\end{equation}
where the endpoint perturbations are set to zero.  The same normalized noise
realization is rescaled at all noise levels, so that changes with respect to
$\delta$ can be compared directly.  Since the standard deviation of
$\delta\zeta_i$ is $\delta/\sqrt{3}$, the discrepancy targets are scaled
accordingly.

To assess robustness with respect to the noise distribution, we also use
RMS-matched Gaussian perturbations
\begin{equation}
\label{eq:gaussian_noise}
 g_i^\delta=g(x_i)+\frac{\delta}{\sqrt{3}}\,\eta_i,
 \qquad
 \eta_i\sim\mathcal N(0,1).
\end{equation}
For each source and each noise level, 20 independent Gaussian realizations
are generated.  Because the representative reconstructions under
\eqref{eq:numerical_noise} and \eqref{eq:gaussian_noise} are visually and
quantitatively similar, only the uniform-noise results are plotted; the
Gaussian experiments are summarized statistically in
Table~\ref{tab:gaussian_robustness}.

Three sources are considered:
\begin{align}
 f^{(1)}(x)
 &=0.8\sin x
 +2.5\mathcal H(x-0.85)
 -2.5\mathcal H(x-2.30),
 \label{eq:test_source_1}\\
 f^{(2)}(x)
 &=0.6+0.3\frac{x}{\pi}+0.8\sin x
 +2.5\mathcal H(x-0.85)
 -2.5\mathcal H(x-2.30),
 \label{eq:test_source_2}\\
 f^{(3)}(x)
 &=0.6-0.2\frac{x}{\pi}+0.9\sin x
 +1.6\mathcal H(x-0.70)
 -0.7\mathcal H(x-1.55)
 +0.3\mathcal H(x-2.40).
 \label{eq:test_source_3}
\end{align}
The first two sources contain two jumps of equal magnitude.  They differ in
that $f^{(1)}(0)=f^{(1)}(\pi)=0$, whereas
$f^{(2)}(0)=0.6$ and $f^{(2)}(\pi)=0.9$.  The third source contains three
unequal jumps; the smallest jump has magnitude $0.3$, approximately $10\%$
of $\max_x|f^{(3)}(x)|$.  No information about the number or locations of
the breakpoints is supplied to any reconstruction method.

The parameters are summarized in Table~\ref{tab:numerical_parameters}.
The detector uses four primary intervals of length $H=\pi/4$ and three
half-shifted intervals. Each coarse fit uses $m_c=19$ nodes and $n_c=9$
modes. Fine localization uses all $577$ nodes in an interval of length $H$
and
\begin{equation}\label{eq:fine_mode_rule_numerics}
n_f=
\begin{cases}
9, & \delta\geq10^{-3},\\
12, & 10^{-4}\leq\delta<10^{-3},\\
15, & \delta<10^{-4}.
\end{cases}
\end{equation}
The geometry and thresholds are fixed across all sources and noise
realizations.

\begin{table}[!t]
\centering
\caption{Parameters used in the numerical experiments.}
\label{tab:numerical_parameters}
\scriptsize
\setlength{\tabcolsep}{4pt}
\begin{tabular}{lll}
\toprule
Stage & Parameter & Value \\
\midrule
Forward data
& $(M_x,y_0,N_{\rm ex})$
& $(2305,0.7,2400)$ \\
Coarse detection
& $(H,n_c,m_c,T_{\rm det})$
& $(\pi/4,9,19,2)$ \\
& $(\beta,\tau_{\rm det},\varepsilon_{\rm sv})$
& $(0.49,1.05,10^{-7})$ \\
& $(\gamma,\kappa_c,d_c)$
& $(0.15,2.0,H/4)$ \\
Fine localization
& $(r_a,\kappa_f,d_f)$
& $(H/4,1.5,H/8)$ \\
& nodes and $n_f$
& $577$ and \eqref{eq:fine_mode_rule_numerics} \\
Differentiation
& $(n,m_{\rm rec},T_{\rm rec},\tau_{\rm rec},\rho)$
& $(9,19,6,1.10,2.00)$ \\
& $(\varepsilon_{\rm sv},d_{\max})$
& $(10^{-10},14)$ \\
Source correction
& $N_{\rm corr}$
& $100$ \\
Yang truncation
& $(N_{\max},C_{\rm DP})$
& $(300,1.10)$ \\
Full-grid TV
& $(C_{\rm TV},N_{\rm ADMM},N_{\rm out})$
& $(1.10,3000,10)$ \\
Gaussian tests
& realizations per case
& $20$ \\
\bottomrule
\end{tabular}
\end{table}

For Yang's comparison method, the sine coefficients of the noisy observation
are computed by the composite trapezoidal rule, and the source is
reconstructed by
\begin{equation}
\label{eq:yang_numerical_formula}
 f_{N,\mathrm{Y}}^\delta(x)
 =
 \sum_{n=1}^{N}
 \frac{n^2}{1-e^{-ny_0}}
 \widehat g_n^\delta\sin(nx).
\end{equation}
The cutoff $N$ is the smallest integer satisfying
\begin{equation}
\label{eq:yang_discrepancy_numerics}
 \left\|
 \sum_{n=1}^{N}\widehat g_n^\delta\sin(nx_i)
 -g_i^\delta
 \right\|_2
 \leq
 C_{\rm DP}\,\delta\sqrt{\frac{M_x}{3}},
 \qquad C_{\rm DP}=1.10,
\end{equation}
subject to $N\leq N_{\max}=300$.

The TV comparison is performed directly on the full set of interior grid
values.  Let $Q$ denote the orthogonal discrete sine transform and set
\[
 A_h
 =
 Q\operatorname{diag}\left(
 \frac{1-e^{-ny_0}}{n^2}
 \right)Q^{\mathsf T}.
\]
The TV reconstruction is defined by the standard quadratic data-fidelity
plus first-order bounded-variation penalty
\cite{AcarVogel1994,BenningBurger2018}:
\begin{equation}
\label{eq:tv_numerical_model}
 f_{\alpha,\mathrm{TV}}^\delta
 =
 \arg\min_{v\in\mathbb R^{M_x-2}}
 \left\{
 \frac12\|A_hv-g_{\rm int}^\delta\|_2^2
 +\alpha\|Dv\|_1
 \right\},
\end{equation}
where $D$ is the first-difference matrix on adjacent interior nodes.  No
difference to a prescribed endpoint value is included, so nonzero endpoint
traces are not penalized artificially.  All interior sine modes are retained,
and neither exact nor detected breakpoint information enters
\eqref{eq:tv_numerical_model}.  The parameter $\alpha$ is selected by the Morozov principle with
$C_{\rm TV}=1.10$. Starting from $\alpha_0=\max\{10^{-14},\delta\}$, a
doubling/halving bracket and at most ten bisection steps are used; the
selected value is the largest tested $\alpha$ whose residual does not exceed
the discrepancy radius, with a $1.5\%$ stopping tolerance. The split
Bregman/ADMM iteration \cite{GoldsteinOsher2009} uses at most $3000$ steps.
Its penalty is initialized by
$\varrho_0=\min\{10^{-1},\max\{10^{-7},5\alpha\}\}$ and updated every
$100$ steps when the primal/dual residual ratio exceeds $10$. The absolute,
relative, and relative-change tolerances are $10^{-7}$, $2\times10^{-6}$,
and $10^{-6}$, respectively. Source updates are evaluated in DST coordinates,
so no dense matrix is formed.

For a set $\Omega\subset[0,\pi]$, the relative error is measured by
\begin{equation}
\label{eq:relative_error_numerics}
 E_{\Omega}(v_{\rm num})
 =
 \frac{\|v_{\rm num}-v\|_{L^2(\Omega)}}
 {\|v\|_{L^2(\Omega)}}.
\end{equation}
In addition to the global error, we report the error on the smooth region
\begin{equation}
\label{eq:smooth_region_numerics}
 \Omega_{\rm sm}
 =
 [0.02,\pi-0.02]
 \setminus
 \bigcup_{j=1}^{J}
 (\xi_j-0.03,\xi_j+0.03).
\end{equation}
The exact breakpoint locations in \eqref{eq:smooth_region_numerics} are used
only to assess the numerical error and are not used by the algorithm.  When
the detected and exact breakpoints are matched in increasing order, the
localization error is
\begin{equation}
\label{eq:breakpoint_error_numerics}
 E_{\rm bp}
 =
 \max_{1\leq j\leq J}
 |\widehat\xi_j-\xi_j|.
\end{equation}

\subsection{Breakpoint detection and reconstruction of
\texorpdfstring{$q=-g''$}{q=-g''}}
\label{subsec:numerical_detection_q}

Table~\ref{tab:detection_q_results} summarizes the uniform-noise results.
All true breakpoints are recovered in the nine tests without false
detections. For the two strong-jump sources, the localization error decreases
from about $5\times10^{-3}$ at $\delta=10^{-3}$ to $10^{-4}$--$10^{-5}$ at
lower noise. For the unequal-jump source, all three interfaces, including the
jump of height $0.3$, are detected at $\delta=10^{-4}$; its maximum error
decreases from $1.607\times10^{-3}$ to $3.269\times10^{-4}$.

\begin{table}[htbp]
\centering
\caption{Breakpoint detection and reconstruction errors for $q=-g''$ under
the representative uniform perturbation.  Here ``det.'' denotes the number
of matched detections divided by the number of true breakpoints.}
\label{tab:detection_q_results}
\scriptsize
\begin{tabular}{ccccccc}
\toprule
Source & $\delta$ & $n_f$ & det. & false & $E_{\rm bp}$
& $E_{[0,\pi]}(q)$ / $E_{\Omega_{\rm sm}}(q)$ \\
\midrule
$f^{(1)}$ & $10^{-3}$ & 9  & $2/2$ & 0 & $4.872\times10^{-3}$
& $1.388\times10^{-1}$ / $1.102\times10^{-1}$ \\
$f^{(1)}$ & $10^{-4}$ & 12 & $2/2$ & 0 & $1.527\times10^{-4}$
& $1.817\times10^{-2}$ / $1.475\times10^{-2}$ \\
$f^{(1)}$ & $10^{-5}$ & 15 & $2/2$ & 0 & $5.134\times10^{-5}$
& $1.393\times10^{-3}$ / $9.688\times10^{-4}$ \\
\midrule
$f^{(2)}$ & $10^{-3}$ & 9  & $2/2$ & 0 & $5.452\times10^{-3}$
& $9.242\times10^{-2}$ / $6.168\times10^{-2}$ \\
$f^{(2)}$ & $10^{-4}$ & 12 & $2/2$ & 0 & $1.443\times10^{-4}$
& $1.861\times10^{-2}$ / $1.321\times10^{-2}$ \\
$f^{(2)}$ & $10^{-5}$ & 15 & $2/2$ & 0 & $8.148\times10^{-5}$
& $1.161\times10^{-3}$ / $6.961\times10^{-4}$ \\
\midrule
$f^{(3)}$ & $10^{-4}$ & 12 & $3/3$ & 0 & $1.607\times10^{-3}$
& $3.655\times10^{-2}$ / $2.472\times10^{-2}$ \\
$f^{(3)}$ & $10^{-5}$ & 15 & $3/3$ & 0 & $4.655\times10^{-4}$
& $3.644\times10^{-3}$ / $2.501\times10^{-3}$ \\
$f^{(3)}$ & $10^{-6}$ & 15 & $3/3$ & 0 & $3.269\times10^{-4}$
& $9.436\times10^{-4}$ / $4.437\times10^{-4}$ \\
\bottomrule
\end{tabular}
\end{table}

The $q$ errors decrease consistently with the noise level. Their global and
smooth-region difference is explained by the
$\mathcal O(\varepsilon_{\rm bp}^{1/2})$ mismatch term in
Remark~\ref{rem:breakpoint_location_error}. The nonzero endpoint values of
$f^{(2)}$ cause no loss of stability because each branch is reconstructed
locally. Figure~\ref{fig:source3_structure} shows the hardest detection case;
the corresponding results for $f^{(1)}$ and $f^{(2)}$ are fully summarized
in Table~\ref{tab:detection_q_results}. At low noise, occasional saturation
of the finite GTSVD space reflects an approximation floor rather than the
inclusion of unreliable singular components.

\begin{figure}[!t]
\centering
\includegraphics[width=0.4\textwidth]{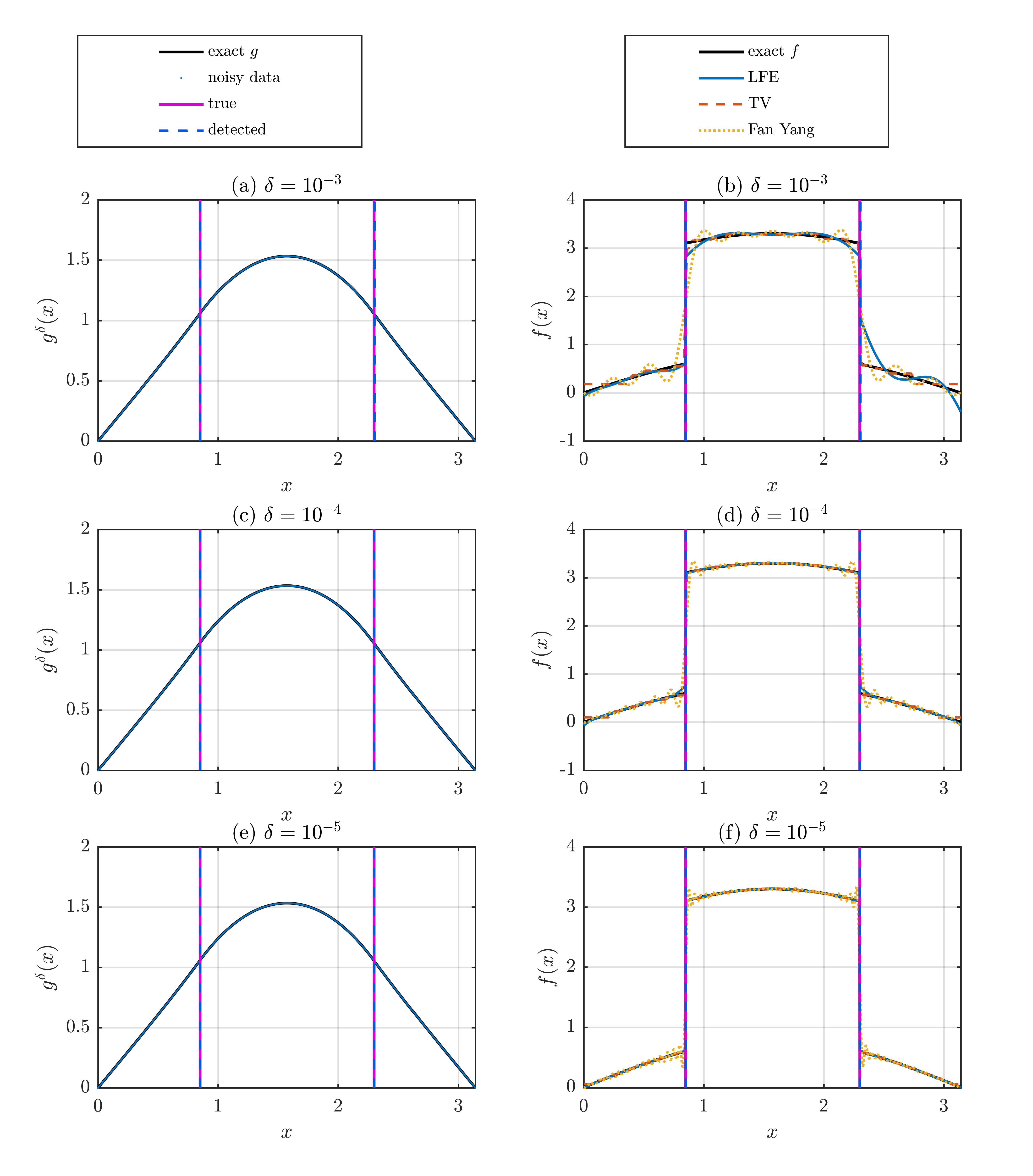}
\caption{Observation data and inverse-source reconstructions for
$f^{(1)}$.  The left column shows the exact and noisy observations; the right
column compares the proposed piecewise LFE reconstruction with full-grid TV
regularization and Yang's Fourier truncation method.  The rows correspond to
$\delta=10^{-3},10^{-4},10^{-5}$.}
\label{fig:source1_inverse}
\end{figure}

\begin{figure}[!t]
\centering
\includegraphics[width=0.4\textwidth]{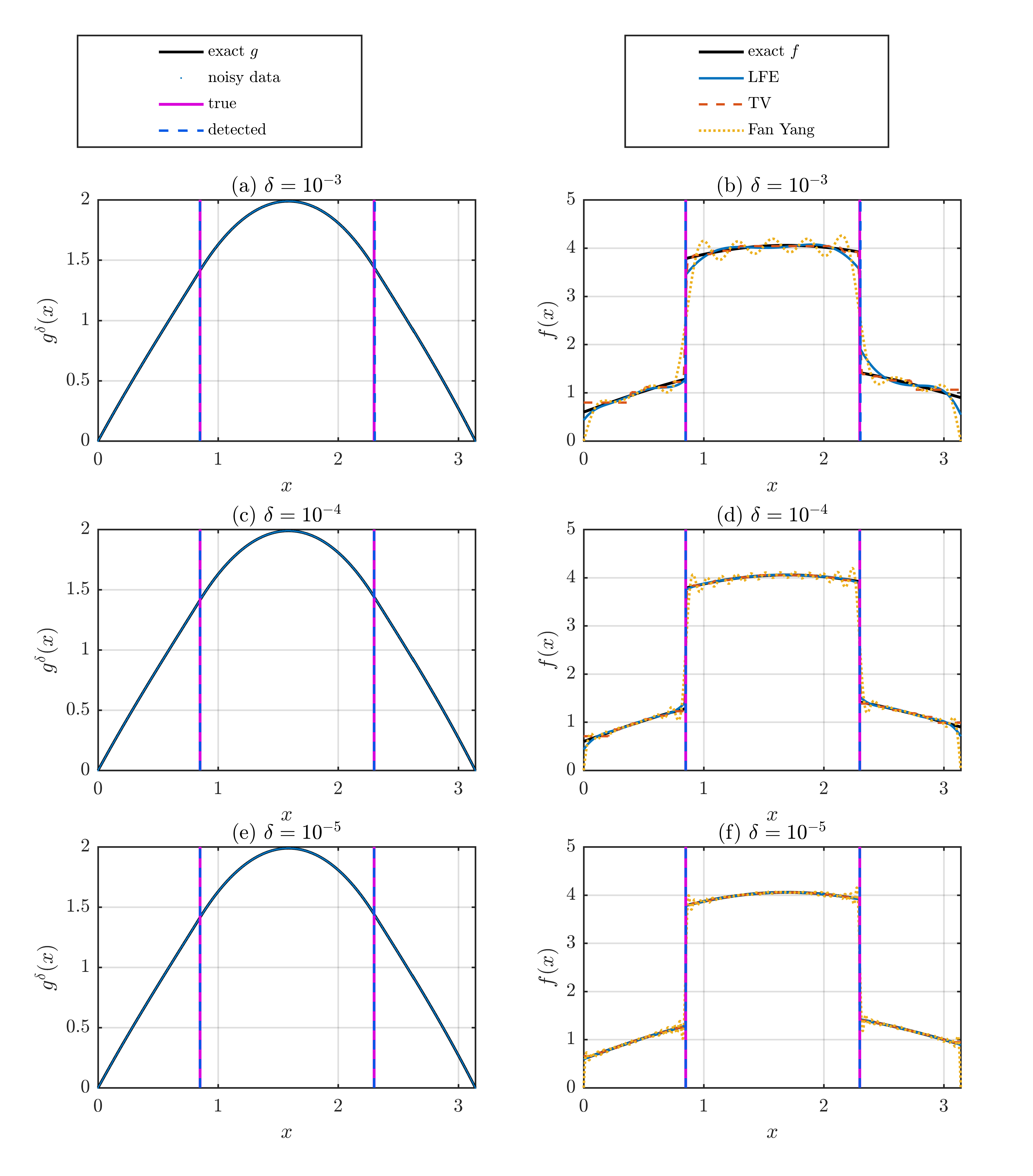}
\caption{Observation data and inverse-source reconstructions for
$f^{(2)}$.  The layout is the same as in Fig.~\ref{fig:source1_inverse}, and
the rows correspond to $\delta=10^{-3},10^{-4},10^{-5}$.}
\label{fig:source2_inverse}
\end{figure}

\begin{figure}[!t]
\centering
\includegraphics[width=0.4\textwidth]{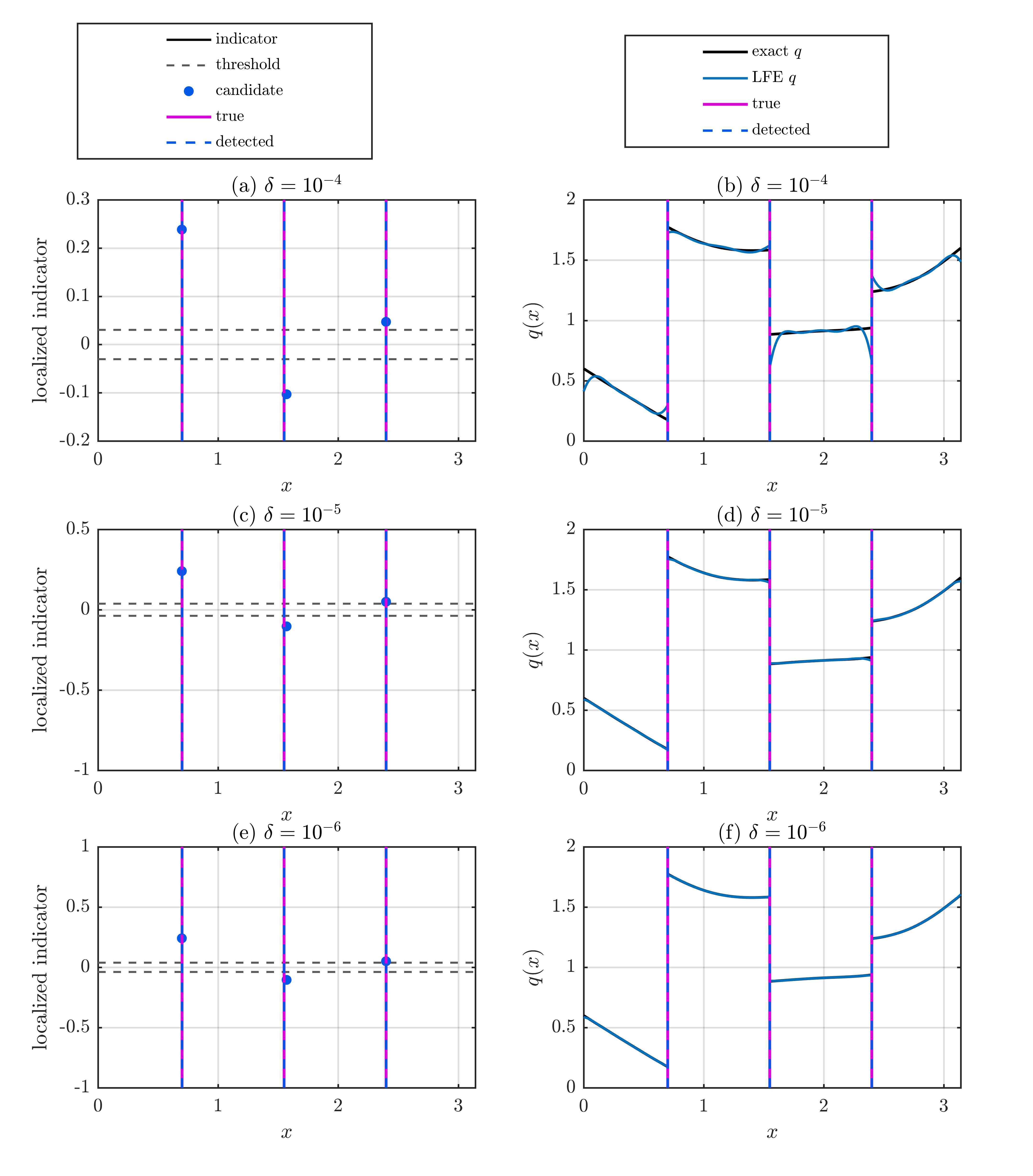}
\caption{Breakpoint detection and reconstruction of $q=-g''$ for the
unequal-jump source $f^{(3)}$, including the weak jump of height $0.3$.  The
rows correspond to $\delta=10^{-4},10^{-5},10^{-6}$.}
\label{fig:source3_structure}
\end{figure}

\begin{figure}[!t]
\centering
\includegraphics[width=0.4\textwidth]{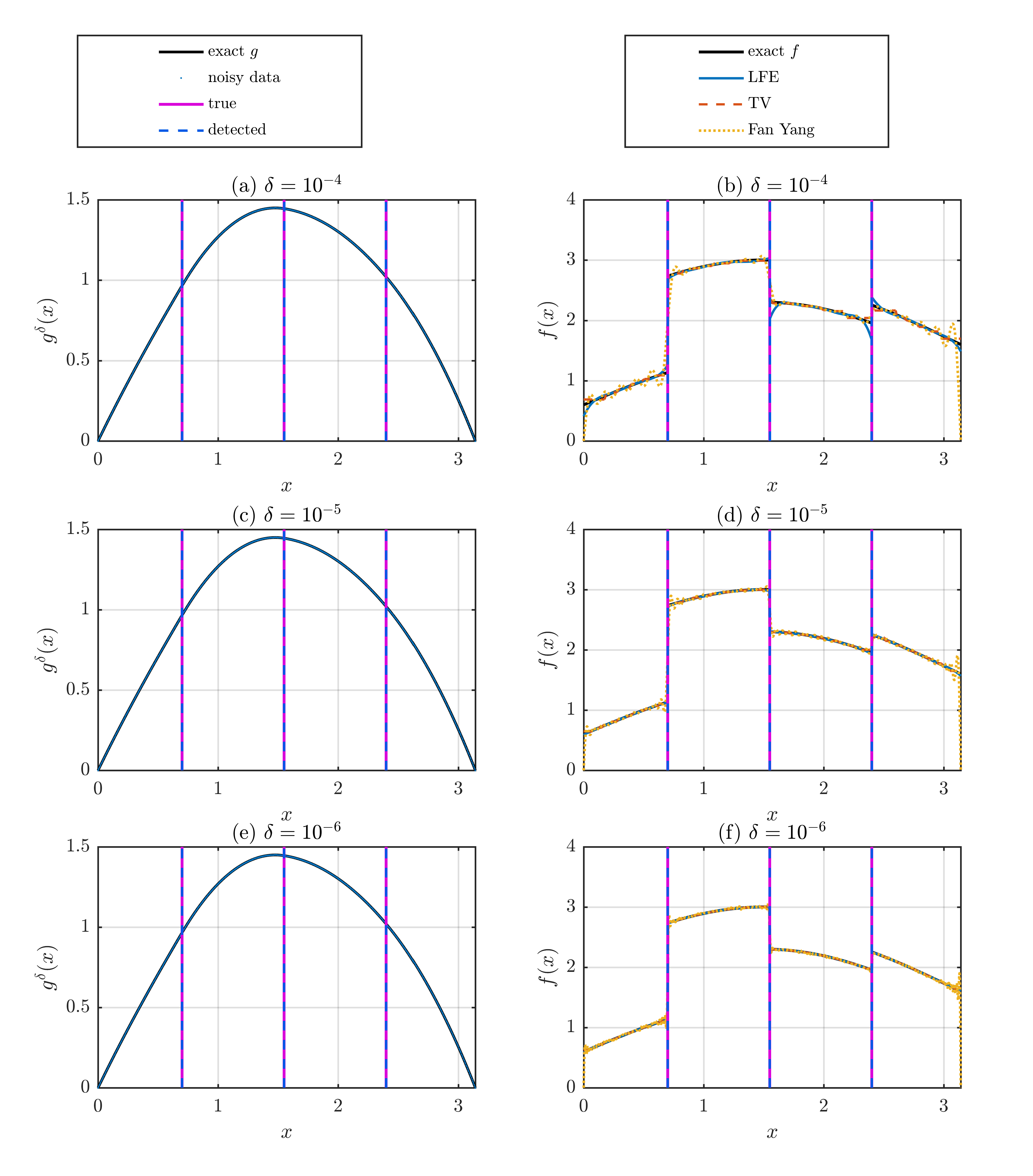}
\caption{Observation data and inverse-source reconstructions for
$f^{(3)}$.  The right column compares the piecewise LFE, full-grid TV, and
Yang reconstructions.  The rows correspond to
$\delta=10^{-4},10^{-5},10^{-6}$.}
\label{fig:source3_inverse}
\end{figure}

\subsection{Source reconstruction and comparison methods}
\label{subsec:numerical_source_comparison}

Table~\ref{tab:source_comparison_results} shows a clear regime change.
At the largest noise levels, TV is slightly more accurate than LFE; for
$f^{(3)}$ at $\delta=10^{-4}$ their global errors are
$1.994\times10^{-2}$ and $2.047\times10^{-2}$. Once the breakpoints are
accurately localized, LFE exploits the smooth branches and converges much
faster. At $\delta=10^{-5}$, its errors for $f^{(1)}$ and $f^{(2)}$ are about
$29$ times smaller than TV and $57$--$61$ times smaller than truncated
Fourier inversion. For $f^{(3)}$ at $\delta=10^{-6}$, the three errors are
$5.359\times10^{-4}$, $8.593\times10^{-3}$, and $2.659\times10^{-2}$.

\begin{table*}[!t]
\centering
\caption{Relative source reconstruction errors under the representative
uniform perturbation.  Each entry in an error column gives
$E_{[0,\pi]}/E_{\Omega_{\rm sm}}$.  The column $N_{\rm Y}$ is the cutoff
selected for Yang's method.}
\label{tab:source_comparison_results}
\scriptsize
\setlength{\tabcolsep}{5pt}
\begin{tabular}{cccccc}
\toprule
Source & $\delta$
& $E(f_{\rm LFE})$
& $E(f_{\rm TV})$
& $E(f_{\rm Y})$
& $N_{\rm Y}$ \\
\midrule
$f^{(1)}$ & $10^{-3}$
& $8.191\times10^{-2}/6.553\times10^{-2}$
& $4.907\times10^{-2}/2.201\times10^{-2}$
& $1.120\times10^{-1}/9.453\times10^{-2}$ & 19 \\
$f^{(1)}$ & $10^{-4}$
& $1.079\times10^{-2}/8.834\times10^{-3}$
& $3.490\times10^{-2}/9.033\times10^{-3}$
& $7.183\times10^{-2}/3.126\times10^{-2}$ & 50 \\
$f^{(1)}$ & $10^{-5}$
& $7.994\times10^{-4}/5.585\times10^{-4}$
& $2.334\times10^{-2}/3.578\times10^{-3}$
& $4.586\times10^{-2}/2.054\times10^{-2}$ & 123 \\
\midrule
$f^{(2)}$ & $10^{-3}$
& $5.048\times10^{-2}/3.395\times10^{-2}$
& $3.835\times10^{-2}/1.700\times10^{-2}$
& $9.435\times10^{-2}/7.700\times10^{-2}$ & 21 \\
$f^{(2)}$ & $10^{-4}$
& $1.014\times10^{-2}/7.213\times10^{-3}$
& $2.743\times10^{-2}/7.131\times10^{-3}$
& $6.037\times10^{-2}/2.562\times10^{-2}$ & 51 \\
$f^{(2)}$ & $10^{-5}$
& $6.336\times10^{-4}/3.886\times10^{-4}$
& $1.828\times10^{-2}/2.823\times10^{-3}$
& $3.846\times10^{-2}/1.580\times10^{-2}$ & 127 \\
\midrule
$f^{(3)}$ & $10^{-4}$
& $2.047\times10^{-2}/1.397\times10^{-2}$
& $1.994\times10^{-2}/1.103\times10^{-2}$
& $6.367\times10^{-2}/2.906\times10^{-2}$ & 47 \\
$f^{(3)}$ & $10^{-5}$
& $2.104\times10^{-3}/1.478\times10^{-3}$
& $1.268\times10^{-2}/4.457\times10^{-3}$
& $4.116\times10^{-2}/1.699\times10^{-2}$ & 115 \\
$f^{(3)}$ & $10^{-6}$
& $5.359\times10^{-4}/2.710\times10^{-4}$
& $8.593\times10^{-3}/1.806\times10^{-3}$
& $2.659\times10^{-2}/7.532\times10^{-3}$ & 284 \\
\bottomrule
\end{tabular}
\end{table*}

The methods are therefore complementary. TV is robust at high noise but
retains a first-order bias on nonconstant branches; the global sine method is
limited by Gibbs oscillations. LFE preserves the piecewise leading term
$q_{M,\delta}$ and expands only the smooth correction,
\[
R_{y_0}q_{M,\delta}
=
\sum_{n=1}^{N_{\rm corr}}
\frac{q_n^{M,\delta}}{e^{ny_0}-1}\phi_n,
\]
whose coefficients decay exponentially. Figures~\ref{fig:source1_inverse},
\ref{fig:source2_inverse}, and \ref{fig:source3_inverse} illustrate the
resulting low-noise advantage.

\subsection{Robustness under Gaussian perturbations}
\label{subsec:gaussian_robustness}

Table~\ref{tab:gaussian_robustness} reports 20 independent RMS-matched
Gaussian trials per case. A trial is successful when the correct number of
breakpoints is recovered without false detections. For
$\delta\leq10^{-4}$, the success rate is $100\%$, including the weak jump in
$f^{(3)}$, and the mean errors are close to their uniform-noise counterparts.
At $\delta=10^{-3}$, the rates decrease to $90\%$ and $85\%$ for the first
two sources, and occasional detection or differentiation outliers enlarge
the all-trial errors. This identifies $\delta=10^{-3}$ as the practical
stability boundary of the present detection--differentiation pipeline.

\begin{table*}[!t]
\centering
\caption{Robustness under RMS-matched Gaussian noise over 20 independent
realizations.  Errors are reported as sample mean, with sample standard
deviation in parentheses.}
\label{tab:gaussian_robustness}
\scriptsize
\setlength{\tabcolsep}{2pt}
\begin{tabular}{llccccc}
\toprule
Example & $\delta$ & Success & $E_{\rm bp}$
& $E_{\rm LFE}^{\rm all}$
& $E_{\rm LFE}^{\rm succ}$
& $E_{\rm Y}$ \\
\midrule
Ex.~1 & $10^{-3}$ & $90.0\%$
& $2.34\times10^{-3}\;(1.1\times10^{-3})$
& $1.45\times10^{-1}\;(2.5\times10^{-1})$
& $1.20\times10^{-1}\;(2.4\times10^{-1})$
& $1.11\times10^{-1}\;(2.0\times10^{-3})$ \\
Ex.~1 & $10^{-4}$ & $100.0\%$
& $3.66\times10^{-4}\;(1.9\times10^{-4})$
& $1.17\times10^{-2}\;(5.7\times10^{-3})$
& $1.17\times10^{-2}\;(5.7\times10^{-3})$
& $7.13\times10^{-2}\;(1.0\times10^{-3})$ \\
Ex.~1 & $10^{-5}$ & $100.0\%$
& $4.22\times10^{-5}\;(1.9\times10^{-5})$
& $9.36\times10^{-4}\;(4.4\times10^{-4})$
& $9.36\times10^{-4}\;(4.4\times10^{-4})$
& $4.57\times10^{-2}\;(4.9\times10^{-4})$ \\
\midrule
Ex.~2 & $10^{-3}$ & $85.0\%$
& $3.22\times10^{-3}\;(9.7\times10^{-4})$
& $1.75\times10^{-1}\;(5.2\times10^{-1})$
& $4.27\times10^{-2}\;(7.7\times10^{-3})$
& $9.03\times10^{-2}\;(4.2\times10^{-3})$ \\
Ex.~2 & $10^{-4}$ & $100.0\%$
& $3.44\times10^{-4}\;(2.0\times10^{-4})$
& $1.14\times10^{-2}\;(5.5\times10^{-3})$
& $1.14\times10^{-2}\;(5.5\times10^{-3})$
& $5.94\times10^{-2}\;(8.5\times10^{-4})$ \\
Ex.~2 & $10^{-5}$ & $100.0\%$
& $8.83\times10^{-5}\;(3.8\times10^{-5})$
& $1.15\times10^{-3}\;(2.0\times10^{-3})$
& $1.15\times10^{-3}\;(2.0\times10^{-3})$
& $3.83\times10^{-2}\;(4.9\times10^{-4})$ \\
\midrule
Ex.~3 & $10^{-4}$ & $100.0\%$
& $2.03\times10^{-3}\;(1.0\times10^{-3})$
& $1.96\times10^{-2}\;(1.7\times10^{-2})$
& $1.96\times10^{-2}\;(1.7\times10^{-2})$
& $6.36\times10^{-2}\;(5.6\times10^{-4})$ \\
Ex.~3 & $10^{-5}$ & $100.0\%$
& $3.10\times10^{-4}\;(2.3\times10^{-4})$
& $2.94\times10^{-3}\;(2.5\times10^{-3})$
& $2.94\times10^{-3}\;(2.5\times10^{-3})$
& $4.11\times10^{-2}\;(4.6\times10^{-4})$ \\
Ex.~3 & $10^{-6}$ & $100.0\%$
& $3.15\times10^{-4}\;(2.7\times10^{-5})$
& $6.30\times10^{-4}\;(2.0\times10^{-4})$
& $6.30\times10^{-4}\;(2.0\times10^{-4})$
& $2.68\times10^{-2}\;(2.9\times10^{-4})$ \\
\bottomrule
\end{tabular}

\medskip
\begin{minipage}{0.97\linewidth}
\scriptsize
$E_{\rm bp}$ and $E_{\rm LFE}^{\rm succ}$ are conditional on successful
recovery of the correct breakpoint count.  $E_{\rm LFE}^{\rm all}$ includes
all trials and therefore also captures occasional failures of structural
identification.  The Gaussian standard deviation is $\delta/\sqrt{3}$,
matching the RMS of the bounded uniform perturbation.
\end{minipage}
\end{table*}

Overall, TV is an effective high-noise baseline, whereas the localized
method is substantially more accurate once the interfaces are reliable. The
low-noise decrease is consistent with
Theorem~\ref{thm:exact_breakpoint_convergence}, and the global/smooth-region
gap agrees with Remark~\ref{rem:breakpoint_location_error}. No
source-dependent parameter adjustment is used.

\section{Conclusions}\label{sec:conclusion}

The inverse source map was decomposed into regularized second-order
differentiation and a boundedly invertible correction. Because the correction
between $-g''$ and $f$ is analytic, the two quantities have identical
interior singular support and jump data. A localized Fourier extension
realization combines staggered breakpoint detection, GTSVD-regularized
coefficients, structure-aligned differentiation, and an exponentially smooth
spectral correction. Exact-partition convergence, finite-dimensional peak
stability, and interface-mismatch estimates describe the main error sources.

Numerically, TV is competitive at strong noise, while the localized method
has a clear advantage in the moderate- and low-noise regimes, where it can
use the higher regularity of each smooth branch. Repeated Gaussian tests also
identify the current high-noise stability limit. Future work will study
sharper detector estimates and differential or pseudodifferential
preconditioners for more general and multidimensional inverse source maps.

\section*{Funding}
This work was partly supported by the Natural Science Foundation of Shandong
Province under Grant Nos. ZR2026MS0015 and ZR2025MS28.
\section*{Data availability}
No data was used for the research described in the article.

\end{document}